
\documentclass[twoside,leqno,twocolumn]{article}

\usepackage[letterpaper]{geometry}

\usepackage{ltexpprt}
\usepackage{mathbbol}

\usepackage{amsmath}
\usepackage{url}

\usepackage{multirow}

\usepackage{graphicx}

\usepackage{enumitem}

\usepackage{algorithm}
\usepackage{algpseudocode}

\algnewcommand{\LeftComment}[1]{\Statex \(\triangleright\) #1}

\newcommand{\R}{\mathbb{R}}
\newcommand{\Z}{\mathbb{Z}}
\newcommand{\gr}{\mathrm{gr}}

\newcommand{\ignore}[1]{}

\newcommand{\Ex}{\mathit{Ex}}

\usepackage{hyperref}

\begin{document}

\title{
Fast Minimal Presentations of Bi-graded Persistence Modules
\thanks{Supported by Austrian Science Fund (FWF) grant number P 29984-N35.}
}
\author{Michael Kerber\thanks{Graz University of Technology, Graz, Austria}
\and Alexander Rolle\footnotemark[2]}
\date{}

\maketitle





\begin{abstract} 
Multi-parameter persistent homology is a recent branch of topological data
analysis. In this area, data sets are investigated through the lens of homology
with respect to two or more scale parameters. 
The high computational cost of many algorithms calls for a preprocessing step
to reduce the input size. 
In general, a minimal presentation is the smallest possible representation of a persistence module.
Lesnick and Wright~\cite{lw-computing}
proposed recently an algorithm (the LW-algorithm) for computing minimal presentations based on matrix reduction. 

In this work, we propose, implement and benchmark several improvements
over the LW-algorithm. Most notably, 
we propose the use of priority queues
to avoid extensive scanning of the matrix columns, which constitutes the 
computational bottleneck in the LW-algorithm, and
we combine their algorithm 
with ideas from the multi-parameter chunk algorithm 
by Fugacci and Kerber~\cite{fk-chunk}.
Our extensive experiments show that our algorithm outperforms the LW-algorithm
and computes the minimal presentation for data sets with millions of simplices
within a few seconds.
Our software is publicly available.
\end{abstract}


\section{Introduction.}
\label{sec:introduction}
\paragraph{Motivation.}
Persistent homology is a multi-scale approach to extract information from 
a data set that would remain hidden using conventional methods.
The idea is to consider the data at multiple scales and track the topological
evolution as the scale changes. The success of this theory, 
which is a primary tool of \emph{topological data analysis}, 
has been manifested in countless
applications, e.g.~\cite{rbd-decoding,bmmps-persistent,cosmo2,phr-multiscale}.
One major reason for this success is the existence of fast algorithms to compute
the required topological invariants.

Multi-parameter persistent homology extends the above concepts 
by filtering the data by several independent scale parameters, 
rather than just one. 
Indeed, it is quite often natural 
to look at multi-dimensional parameter spaces in data analysis, 
and this theory allows for a more fine-grained topological analysis. 
As a simple example, when studying point cloud data, it is common to
``thicken'' the point set (i.e., replacing points with balls of radius $r>0$)
in the single-parameter setup. Since the results of 
such an analysis are unstable with respect to outliers, 
the point cloud is usually preprocessed by removing outliers first. 
This preprocessing introduces another scale parameter
that determines how aggressively points are classified as outliers
(see, e.g.,~\cite{bcdfow-topological}).

A drawback of multi-parameter persistent homology is that one has to deal
with more complicated problems, already for the case of two parameters
that we consider in this work: 
a complete discrete topological invariant does not exist~\cite{cz-theory},
making the algebraic objects richer (and more difficult) than in the
single-parameter setting. This also extends to computational problems:
for instance, computing the universal distance (i.e.,``most meaningful''~-- see \cite{lesnick-theory} for details) is NP-hard for two parameters,
but polynomial time (and practically efficient) for one parameter~\cite{kmn-geometry}.
For several other problems, polynomial-time algorithms have been described
recently, such as for computing the matching distance~\cite{bcfg-new,klo-exact,kn-efficient}, a multi-parameter kernel~\cite{cfklw-kernel}, 
and the decomposition of multi-parameter modules into indecomposable elements~\cite{dx-generalized} as well as weaker decompositions~\cite{ckmw-elder}.
Despite being polynomial time, the complexity bounds 
typically have rather large exponents and this is also reflected in
poor behavior for large input sizes in practice.

To improve all aforementioned algorithms, we want to preprocess the input, 
reducing its size as much as possible, while retaining all the relevant homological information. 
Such a reduction is possible because usually,
the input is given as a bi-graded simplicial complex,
but for the sake
of persistent homology, 
one is only interested in the evolution of the homology of the complex 
across the various bi-grades.
As homology is only a coarse topological summary of a complex, 
we can hope for a more succinct representation.

A \emph{minimal presentation} is such a succinct
representation.
In short, a presentation encodes the homology by a set of generators
and relations, each of which is associated with a bi-grade. 
The generators create homological features at the corresponding bi-grade,
and the relations turn a certain linear combination of generators trivial
at their bi-grade. This data can be encoded in a matrix where rows represent
generators and columns represent relations, and each row and column also
stores a bi-grade. This matrix is the main object of our studies.
A presentation is called minimal if among all presentations of the same
homological data, it possesses the minimal number of generators and relations
(we refer to Section~\ref{sec:definitions} for a precise definition).

A recent algorithm for computing minimal presentations
by Lesnick and Wright~\cite{lw-computing}
(the LW-algorithm, from now on)
is a key component of RIVET~\cite{rivet},
the most prominent software package supporting $2$-parameter persistent homology calculations.
RIVET allows for an interactive exploration of bi-filtered data sets
and has received attention in various data analysis applications,
including neuroscience~\cite{kanari_et_al, sizemore_et_al}, physics~\cite{cole_shiu},
and machine learning~\cite{vipond}.

\paragraph{Our contribution.}
We describe a fast algorithm to compute a minimal presentation.
More precisely, we introduce improvements to the LW-algorithm,
drastically improving its performance.

The LW-algorithm
arrives at a minimal presentation by a sequence of column operations,
employing a smart traversal strategy of the columns 
(we describe the approach in detail in Section~\ref{sec:lesnick-wright}).
However, it turns out that the bulk of its running time is not spent
on performing the actual work (i.e., the column operations), but \emph{searching}
for the work to be done. 

We introduce several major improvements
which are all variations of the same theme:
at several occasions,
the algorithm scans a range of columns in order to find which need to be updated. 
However, most of the scanned columns typically remain unchanged.
Our observation is that those columns that need an update can be 
predicted by earlier steps of the algorithm, hence we can schedule them
for updates (using a priority queue) and avoid the scan completely.
This simple observation removes the major computational bottlenecks
and leads to immense performance gains.

We obtain further speed-ups by using the multi-chunk algorithm of Fugacci
and Kerber~\cite{fk-chunk} as a preprocessing step: this algorithm transforms
a bi-graded cell complex into a smaller bi-graded cell complex 
with the same persistent homology. 
The result is not minimal in terms of presentations, but this algorithm
typically gets rid of many cells that the LW-algorithm would only remove in the last step.
Hence, preprocessing results in starting the LW-algorithm
with fewer rows and columns. As the chunk algorithm only needs
a fraction of the runtime, the preprocessing is efficient in practice.
We investigated further variants:
\begin{itemize}
\item We implemented a clearing optimization (that is already hinted at
in~\cite{lw-computing}) to avoid certain column operations in the algorithm.
Unlike in the single-parameter case where clearing is an important tool
for efficiency~\cite{bkr-clear}, it only yields a small improvement in our scenario.
Part of the reason is that the chunk preprocessing already removes
most of the potential for clearing, and it does so more efficiently.

\item 
The algorithm consists of $5$ main sub-steps, out of which $3$ can be
easily parallelized, and the other two are independent and can be ran
parallel to each other as well. We show that parallelization yields
to additional speed-ups.

\item We use a matrix data structure derived 
from the \textsc{phat} library. As observed in~\cite{bkrw-phat}, the column
type of the matrix has a significant influence on the performance.
Our implementation is generic in the column type, so that we could
benchmark our approach with all columns types that \textsc{phat} offers.
\end{itemize}

We tested our C++-implementation on a variety of data sets,
including triangular mesh data, bifiltered flag complexes, and others.
The speed-ups obtained by our improvements are significant for all tested instances,
and in many cases, the runtime and memory consumption changes from a quadratic
to a near-linear empirical behavior;
see Sections~\ref{sec:improvements} 
and~\ref{sec:further_experiments}.
In particular, computing the minimal presentation for complexes
with millions of cells is now possible within a few seconds.
Our software is called mpfree, and it is available.\footnote{\url{https://bitbucket.org/mkerber/mpfree}}

\paragraph{Further related work.}
The LW-algorithm is specialized for the case of bi-graded persistence modules.
However, minimal presentations can be computed for more general (graded) modules,
and more general algorithms are provided by computer algebra systems
like Macaulay or Singular. Lesnick and Wright~\cite{lw-computing} compare
their implementation with these systems and show that their algorithm
is much faster in this special case.

There are alternatives to minimal presentations 
to reduce the size of cell complexes. Besides the aforementioned result
by Fugacci and Kerber~\cite{fk-chunk}, a line of research simplifies
cell complexes through collapses based on Discrete Morse Theory~\cite{akl-reducing,aklm-acyclic,sifl-computing}. 
These approaches empirically yield
a similar compression rate as the chunk reduction, 
but only provide weaker
theoretical guarantees in terms of minimality, 
and their current implementation
is not as fast as the chunk algorithm.

This work is similar in spirit to several papers improving the performance
of the algorithm to compute persistence diagrams in a single parameter.
See~\cite{optgh-roadmap} for a survey. Most algorithmic improvements
are based on rather simple observations which nonetheless have to be combined
in the right way to yield an efficient result~-- the \textsc{Phat} library
is dedicated to studying the different algorithmic variants;
see~\cite{bkrw-phat} for a discussion how different parameters 
can change the outcome drastically.

\paragraph{Scope.}
We focus in this paper on the algorithm engineering aspect of our work.
That means that we introduce algebraic concepts only as much as needed
to describe the algorithmic steps and our improvements, 
and shift the correctness proofs to the appendix.


\section{Preliminaries.}
\label{sec:definitions}
\paragraph{Bi-Graded matrices.} 
A \emph{bi-grade} is an element of $\Z^2$. 
A bi-grade $g=(g_x,g_y)$ is \emph{smaller} than $h=(h_x,h_y)$,
written $g\leq h$, if $g_x\leq h_x$ and $g_y\leq h_y$.

We fix the base field $K:=\Z_2$ for simplicity, although the algorithm
can be rephrased for any field $K$ with little effort. 
A \emph{bi-graded matrix} is
an $(m\times n)$-matrix over $K$, where each row and each column
is annotated with a bi-grade
such that whenever the $(i,j)$-entry of the
matrix is $1$, then $g_{(i)}\leq g^{(j)}$, where $g_{(i)}$ is the grade
of the $i$-th row and $g^{(j)}$ is the grade of the $j$-th column.
In what follows, we will skip the ``bi''-prefix and only talk about 
\emph{graded matrices} and \emph{grades}.

We will assume that a graded matrix is stored in some sparse column 
representation, that is, the row indices of non-zero entries in a column
are stored in some container data structure (e.g., a dynamic vector
or a linked list). Also, we assume that
a graded matrix always stores its rows and columns
in co-lexicographic order with respect to the grades, with rows/columns
of the same grade ordered arbitrarily. Note that this order
is a refinement of $\leq$ to a total order. 

For a non-zero column, the \emph{pivot} is the largest row index with 
non-zero entry with respect to the fixed total colex-order of rows.
We call a column \emph{local} if the grade of the pivot row equals
the grade of the column.

\paragraph{Persistence modules.}
A \emph{persistence module} is a family $(V_p)_{p\in\Z^2}$
of $K$-vector spaces for each grade,
together with maps $f_{p\rightarrow q}:V_p\rightarrow V_q$
for every pair of grades with $p\leq q$ which are \emph{functorial}, 
that means, $f_{p\rightarrow p}$ is the identity and $f_{q\rightarrow r}\circ f_{p\rightarrow q}=f_{p\rightarrow r}$ for $p\leq q\leq r$.
The term ``module'' comes from the fact that this object carries the
algebraic structure of a $\Z^2$-graded module over the polynomial ring $K[x,y]$~\cite{cz-theory,ck-representation}.

We can define persistence modules using graded matrices.
Fixing a grade $p\in\Z^2$, let $M_{\leq p}$ denote the submatrix of $M$
that contains the rows and columns with grades $\leq p$. 
Note that a graded $m\times n$ matrix $M$ induces a linear map $K^n\to K^m$
by matrix-vector multiplication, and the same is true for $M_{\leq p}$,
adjusting the dimensions of domain and co-domain to the dimension of the submatrix.
We call a pair $(A,B)$ with $A$ an $\ell\times n$ and $B$ an $n\times m$-matrix a \emph{free implicit representation (firep)} 
if the row grades of $A$ coincide with the column grades of $B$,
and $BA=0$. In that case, for every $p$, also $B_{\leq p}A_{\leq p}=0$ holds,
implying that the image of $A_{\leq p}$ is contained 
in the kernel of $B_{\leq p}$. Hence we can define
\[
V_p:=(\mathrm{ker} B_{\leq p}) / (\mathrm{im} A_{\leq p})
\]
and $f_{p\to q}$ to be map that assigns to an element of $\mathrm{ker} B_{\leq p}$ an element of $\mathrm{ker} B_{\leq q}$ by padding additional dimensions
with zeros. Note that this construction, despite its algebraic formalism,
is very natural in practice: Given a bi-graded simplicial complex $C$
and a dimension $d$, picking for $A$ the $((d+1),d)$-boundary matrix
and for $B$ the $(d,(d-1))$-boundary matrix yields a free implicit representation.
For readers unfamiliar with these concepts, we refer to Appendix~\ref{app:simplicial} for more explanations.

A single graded $(m\times n)$-matrix $M$ can be interpreted
as a persistence module: the persistence module defined by the firep $(M, 0)$.
In detail:
denoting $B:=\{b_1,\ldots,b_m\}$ as basis elements of $K^m$, we grade $b_i$ with the $i$-th row grade of $M$.
We interpret every column of $M$ as a relation, 
a linear combination $r_i$
of elements in $B$, yielding $R=\{r_1,\ldots,r_n\}$. Then, we set
\[
V_p:=\langle B_{\leq p}\rangle / \langle R_{\leq p}\rangle
\]
where $\langle B_{\leq p}\rangle$ is the span of all basis vectors that have grade $\leq p$
and, likewise, $\langle R_{\leq p}\rangle$ is the span of all relations at grade $\leq p$.
The linear maps $V_{p\to q}$ are induced by the natural inclusions
$B_{\leq p}\to B_{\leq q}$. This interpretation of a graded matrix
as persistence module is called a \emph{presentation} of a graded module,
where the set $B$ is called the \emph{generators} and set $R$ is called
the \emph{relations} of the presentation. A presentation is \emph{minimal}
if among all possible presentations of the persistence module $(V_p)$,
it has the smallest number of generators and relations.
We provide more algebraic background on this at the beginning of Appendix~\ref{app:min_pres_algebra}.

\paragraph{Experimental setup.}
We will interleave the algorithmic description and the experimental evaluation
to show the impact of our improvements.
Our implementation is written in C++, compiled with gcc-7.5.0,
and ran on a workstation with an Intel(R) Xeon(R) CPU E5-1650 v3 CPU (6 cores,
3.5GHz) and 64 GB RAM, running GNU/Linux (Ubuntu 16.04.5).
We measured the overall time and memory consumption using the linux
command \texttt{/usr/bin/time}, and the \texttt{timer} library of the
\textsc{Boost} library to measure the time for subroutines.
The published version of our software has benefited from low-level improvements
since we ran these experiments, but the performance is not significantly different. 
The input data and the benchmark scripts are 
available on request. 

In Sections~\ref{sec:lesnick-wright} and \ref{sec:improvements},
we will use running examples 
generated by the convex hull of $n$ points 
on a sphere sampled uniformly at random.
By Euler's formula, the convex hull consists of exactly
$e=3n-6$ edges and $f=2n-4$ triangles.
This yields a free implicit representation $(A,B)$,
where $A$ is the $(e\times f)$-boundary matrix for triangles and edges,
and $B$ the $(n\times e)$-boundary matrix for edges and vertices.
The grades of a vertex are simply its $x$- and $y$-coordinates
(ignoring the $z$-coordinate), and the $x$-/$y$-grades
of an edge and a triangle is just the maximal $x$-/$y$-coordinate
among its boundary vertices
(the fact that the grades are in $\R^2$ rather than $\Z^2$
does not make a difference because there are only finitely many of them and so
they can be mapped bijectively to $\Z^2$ preserving orders).
This is also known as the \emph{lower star (bi-)filtration}
which generalizes a commonly used filtration type
for a single parameter
and has been used as a benchmark example in previous multi-parameter work~%
\cite{sifl-computing,fk-chunk}.
We also performed
experiments on various other types of data and our speed-ups significantly improve
all tested instances; 
we report on this in Section~\ref{sec:further_experiments}.
For all randomly generated data sets, the listed results are averaged
over $5$ random instances of the same size.

\ignore{

Table~\ref{tbl:k-fold-stats} displays some properties of these
data sets. We point out three major characteristics: the number
of columns of $A$ and of $B$ are quite balanced, the number 
of different grade coordinates
is very unbalanced, and the minimal presentation 
is around a factor $16$ smaller than the input size (in terms of columns).

\begin{table}\centering
\begin{tabular}{c|c|c|c}
N & $\ell$,$n$,$m$ & (X,Y) & OUT\\
\hline
100K & 57K, 44K, 8K   & 5K,10 & 6K$\times$ 2K\\
200K & 112K, 86K, 15K  & 10K,10 & 13K$\times$ 7K\\
400K & 225K, 173K, 31K  & 21K,10 & 27K$\times$ 14K\\
800K & 454K, 350K, 61K  & 43K,10 & 58K$\times$29K\\
1.6M & 900K, 693K, 122K & 89K,10 & 120K$\times$ 122K\\
3.2M & 1.79M, 1.38M, 242K & 180K,10 & 244K$\times$123K\\
6.4M & 3.66M, 2.83M, 495K & 373K,10 & 509K$\times$257K
\end{tabular}
    \caption{Each row represents a pair $(A,B)$ with $A$ an $\ell\times n$ and $B$
an $n\times m$ matrix. $N:=\ell+n$ is the total
number of columns in $A$ and $B$. 
$X$ and $Y$ denote the number of different $x$- and $y$-coordinates that
appear in grades of the two matrices.
The column ``OUT'' shows the dimension
of the minimal presentation matrix. The letter ``K'' stands for $10^3$, ``M'' for $10^6$ throughout.}
\label{tbl:k-fold-stats}
\end{table}
}


\section{The Lesnick-Wright Algorithm.}
\label{sec:lesnick-wright}
We rephrase the algorithm from~\cite{lw-computing} in combinatorial terms.
While doing that, we try to give some intuition about why certain steps
are performed in the algorithm; 
we refer to the original paper for correctness proofs.

\paragraph{Overview.}
The input of the LW-algorithm is a free implicit representation $(A,B)$,
and the output is a minimal presentation matrix $M$.

The algorithm consists of $4$ steps, where the first $3$ steps convert
the firep $(A,B)$ into a presentation matrix $M'$ 
(called \emph{semi-minimal} in~\cite{lw-computing}), 
and the last step minimizes $M'$ into $M$.
We describe the four steps on a high level first:

\begin{description}[noitemsep]
\item [Min\_gens:]
Computes a minimal ordered set of generators $G$ of the image of $A$.
More precisely, the generators are graded, so that this
encodes the image of $A_{\leq p}$ for every~$p$.
\item [Ker\_basis:] Computes a basis $K$ of the kernel of $B$.
More precisely, the basis elements are graded, so that the basis
encodes the kernel of $B_{\leq p}$ for every~$p$.
\item [Reparam:] 
Re-expresses every element of $G$
as a linear combination in $K$, keeping its grade. 
This is possible since $BA=0$.
The resulting
graded matrix $M'$ is a semi-minimal presentation of the persistence module.
\item [Minimize:] Identifies pairs $(g,r)$ of generators (rows in $M'$)
and relations (columns in $M'$)
where $r$ ``eliminates'' $g$ and both $r$ and $g$ have the same grade. 
In that case, row $g$ and column $r$ are removed from $M'$ after some
algebraic manipulations without changing the persistence module.
Removing all such pairs results in a minimal presentation $M$.
\end{description}

We need to describe the algorithmic details of every step in order to explain our
contributions. We also provide pseudocode in Appendix~\ref{app:pseudocode} 
to complement our textual description, and will frequently refer to it.
We assume that every $x$-coordinate of a grade in $A$ and of $B$ is in $\{1,\ldots,X\}$ 
and every $y$-coordinate in $\{1,\ldots,Y\}$. Hence, we can visualize $A$ and $B$ via a $X\times Y$ integer grid, where each grid cell contains
a (possibly empty) sequence of matrix columns. Traversing the grid
row by row upwards yields the co-lexicographic order of the matrix. 
We will phrase the algorithms for
\texttt{Min\_gens} and \texttt{Ker\_basis} with this interpretation.

As an example, consider the graded matrix
\[
\Ex_1:=\begin{array}{cccccc}
A & B & C & D & E & F\\
(1,1) & (1,1) & (3,2) & (1,3) & (2,3) & (2,3)\\
\hline
0&1&0&1&0&0\\
0&1&0&0&0&0\\
1&0&0&0&1&1\\
0&0&1&0&0&1\\
0&0&1&0&1&0
\end{array}
\]
(we skip the row grades for simplicity).
Its representation as a grid is depicted in Figure~\ref{fig:grid_illustration}.

\begin{figure}
\centering
\includegraphics[width=4cm]{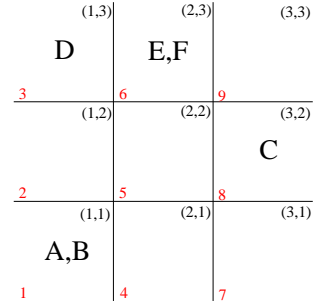}
\caption{The grid representation of the matrix $\Ex_1$ from above. The red numbers
(lower left corner per cell) show the order in which the cells are visited
by the algorithms \texttt{Min\_gens} and \texttt{Ker\_basis}.}
\label{fig:grid_illustration}
\end{figure}

\paragraph{Details of \texttt{Min\_gens}.}
The procedure traverses the columns of $A$ in a certain order defined below.
During the traversal, it maintains a \emph{pivot map} $\rho$, 
a partial map from row indices to column indices. The interpretation is that
$\rho(i)=j$ if column $j$ has been visited, has pivot $i$, and there is
no visited column $j'$ with pivot $i$ and $j'<j$. 
Initially, $\rho$ is the empty map, reflecting the state that no column
has been visited.

At any point of the algorithm, \emph{reducing} a column $j$ means
the following operation (Alg.~\ref{alg:reduce_lw}): as long as $j$ is not empty, has pivot $i$
and $\rho(i)=j'$ with $j'<j$, add column $j'$ to column $j$.
This results in cancellation of the pivot (since the coefficients are over $\Z_2$)
and hence, after the addition,
the pivot of $j$ is strictly smaller than $i$ (or the column is empty).
In either case, the reduction terminates after finitely many iterations, 
and column $j$ is marked as visited. If it ends with a non-empty
column with pivot $i$, set $\rho(i)\gets j$.

We can now describe the procedure \texttt{Min\_gens} (see Alg.~\ref{alg:min_gens_lw}):
Using the grid interpretation from above, traverse the grid cells
in lexicographic order; 
that means, the grid is traversed
column by column from the left, traversing each column bottom-up.
When reaching grid cell $(x,y)$, iterate through all
matrix columns with grade $(1,y), (2,y),\ldots, (x-1,y)$ in that order
(i.e., through all cells on the left of $(x,y)$)
and reduce them as described above. 
Then, iterate through the matrix
columns at grade $(x,y)$ and reduce them as well. 
Append every column at grade $(x,y)$ 
not reducing to $0$ in the output matrix, with grade $(x,y)$.

\ignore{
Note that because the columns of $A$ are stored in co-lexicographic order,
the columns with grades $(1,y),\ldots,(x,y)$ form one consecutive block
of columns which can be determined efficiently. In fact, by precomputing
one index per grid cell, 
the index range for iteration $(x,y)$
can be computed in constant time.
}

\paragraph{Details of \texttt{Ker\_basis}.}
This procedure is similar to the previous one, as it 
visits the columns in the same order, and reduces them when visiting.
There is one difference in the reduction procedure, however: every column
maintains an \emph{auxiliary vector}.
Initially, the auxiliary vector of column $j$ is just the unit vector $e_j$
and whenever column $j'$ is added to column $j$, we also add the auxiliary vector of $j'$
to the auxiliary vector of $j$.
In linear algebra terms, the auxiliary vectors yields an auxiliary matrix $S$
(which is the identity matrix initially)
and letting $B'$
denote the matrix arising from $B$ at any point of the algorithm, 
we maintain the
invariant that $B'=BS$
(in \cite{lw-computing}, the auxiliary matrix is called the ``slave matrix'').
In particular, if the $j$-th column of $B'$ is $0$, the $j$-th column
of $S$ encodes the linear combination of the columns of $B$
that represents a kernel element of the linear map $B$.

We describe the procedure \texttt{Ker\_basis} (Alg.~\ref{alg:ker_basis_lw}): 
Traverse the grid cells in lexicographic order.
When reaching grid cell $(x,y)$, iterate through all
matrix columns with grade $(1,y), (2,y),\ldots, (x,y)$ (in that order)
and reduce them as described above.
If any of these columns turn from non-zero to zero during the reduction,
append the auxiliary vector of the column to the output matrix
and set the grade of this column to $(x,y)$.
The resulting matrix encodes the kernel basis of $B$. Its rows
correspond to the columns of $B$ and thus inherit their grades, yielding
a graded matrix as output.

As an example, we apply the algorithm to the matrix $\Ex_1$ in Figure~\ref{fig:grid_illustration}.
When visiting grade $(2,3)$, column $C$ is not visited yet, so the algorithm
update $\rho(5)\gets E$ and $\rho(4)\gets F$. When reaching grade $(3,2)$,
visiting column $C$, it does not perform a column operation because
$E$ is after $C$ in colexicographic order. So, $\rho(5)\gets C$ is set and
the algorithm continues. At grade $(3,3)$, the algorithm re-reduces columns
$D$, $E$, and $F$ again. In particular, when reducing $E$, it updates 
$E\gets C+E$, updating its pivot to $4$, and setting $\rho(4)\gets E$.
Then when updating $F$, it updates $F\gets E+F$, which is equal to $0$.
The corresponding auxiliary vector is $(0,0,1,0,1,1)^T$. 
Hence, the result is a kernel element representing $C+E+F$,
at grade $(3,3)$.

\paragraph{Details of \texttt{Reparam}.}
Let $G$ denote the result of \texttt{Min\_gens} and $K$
the result of \texttt{Ker\_basis}. Note that $G$ and $K$ have
the same number of rows, with consistent grades. Form the matrix
$(K|G)$ and reduce each column of $G$, using auxiliary vectors. It is guaranteed
that this turns the matrix into $(K|0)$, and the auxiliary vectors of the 
columns of $G$ yields a graded matrix $M'$ which is the output of the
procedure (Alg.~\ref{alg:reparam}).

\paragraph{Details of \texttt{Minimize}.}
Let $n$ denote the number of columns of $M'$, the output of the previous step.
Traverse the columns of $M'$ from index $1$ to $n$.
If column $i$ is a local column (i.e., the grade of its pivot coincides
with the column grade), let $j$ denote its
pivot and iterate through the columns $i+1$ to $n$; if any column $k$ contains
row index $j$, add column $i$ to column $k$ (eliminating the row index at $j$).
At the end of this inner loop, no column except $i$ has a non-zero entry
at index $j$.
We can therefore remove column $i$ and row $j$ from the matrix,
without changing the persistence module that $M'$ presents.
So, remove column $i$ and row $j$ from the matrix.
After the outer loop has finished, re-index the remaining rows and columns, 
and return the resulting graded matrix $M$ as the minimal presentation (Alg.~\ref{alg:minimize_lw}).

As an example, consider the graded matrix
\[
\Ex_2:=\begin{array}{cc|ccccc}
& & A & B & C & D & E\\
& & (1,1) & (2,1) & (3,1) & (3,3) & (3,3)\\
\hline
S & (1,1)&0&1&0&0&1\\
T & (1,1)&1&0&1&0&1\\
U & (1,1)&1&1&1&1&0\\ 
V & (3,1)&0&0&1&1&0\\
W & (3,3)&0&0&0&1&1
\end{array}
\]
The algorithm identifies $A$ as a local column and adds it to columns $B$, $C$, and $D$.
$B$ is a non-local column on which the algorithm does nothing. $C$ is local
also after adding $A$, and the (modified) $C$ is added to $D$. Since $D$ is still local,
it gets added to $E$, turning $E$ into a non-local column. In the end,
we obtain
\[
\Ex_2':=\begin{array}{cc|ccccc}
& & A & \textbf{B} & C & D & \textbf{E}\\
& & (1,1) & (2,1) & (3,1) & (3,3) & (3,3)\\
\hline
\textbf{S} & (1,1)&0&1&0&0&1\\
\textbf{T} & (1,1)&1&1&0&1&0\\
U          & (1,1)&1&0&0&0&0\\ 
V          & (3,1)&0&0&1&0&0\\
W          & (3,3)&0&0&0&1&0
\end{array}
\]
and the minimal presentation is obtained by removing rows $U$, $V$, and $W$
and columns $A$, $C$, and $D$.

\paragraph{Performance.}
The algorithm described in this section is implemented 
in the RIVET library\footnote{\url{https://github.com/rivetTDA/rivet}}.
We have implemented our own version of the algorithm,
for the sake of easier integration of the improvements that follow.
Our algorithm produces precisely the same output file as RIVET
in all tested instances.
Table~\ref{tbl:rivet-comparison} shows the runtime and memory comparison
of the two implementations. 
We also compared for other types of input, and the two versions consistently
vary by a factor of 2-10 in speed, and 1-8 in memory,
probably due to low-level implementation differences.
Therefore, we consider our implementation as a valid proxy for RIVET,
and omit further comparisons with RIVET for the rest of the paper.\footnote{We remark
that we run RIVET sequentially in our tests, but parallelizing does not change the outcome
significantly.}

\begin{table}\centering
\begin{tabular}{c|cc|cc}
&\multicolumn{2}{c|}{RIVET} & \multicolumn{2}{c}{Our RIVET ``clone''}\\
n & Time & Memory & Time & Memory\\
\hline
7.5K & 15.6 & 3.10GB & 3.76 & 468MB\\
15K & 62.5 & 12.3GB & 17.6 & 1.82GB\\
30K & 353  & 49.3GB & 71.6 & 7.17GB\\
60K & --   & $>$64GB &  385 &  28.4GB \\
\end{tabular}
    \caption{Time (in seconds) and memory consumption of the algorithm in RIVET, 
and our version of it (``Clone'') for the convex hull dataset. 
The letter $K$ stands for thousands.
For $60K$ points, RIVET ran out of memory, and our version used up
around 45\% of the memory.}
\label{tbl:rivet-comparison}
\end{table}


\section{Improvements.}
\label{sec:improvements}

We describe methods to significantly improve several steps of the LW-algorithm.
We introduce them one by one and show the effect on the algorithmic performance
using the running example from last section.

\paragraph{Queues.}
From Table~\ref{tbl:rivet-comparison}, we can easily infer
a quadratic complexity of the algorithm in the size of the initial
presentation, both in time and memory.
The reason for that, however, 
is not the cost of the performed column operations
but the sheer size of the grid used in  \texttt{Min\_gens} and \texttt{Ker\_basis}. Since the points on the convex hull are chosen randomly,
the size of the grid is roughly $n\times n$, and both our algorithm
and the RIVET version store the grid as a two-dimensional array
containing some data for each grid cell, leading to this quadratic
behavior. We point out that even if we avoid constructing this array,
just iterating over every grid cell will still lead to quadratic runtime.
The obvious ``solution'' of defining a coarser grid and snapping
each grade to the closest coarse grid cell is not satisfying because
it only offers an approximate solution; on top of that, it is not clear
how to choose the coarser grid. 
We show that coarsening is not necessary, as the algorithm can be
adapted to be indifferent about the grid size in terms of performance:

First note that although the grid is of size $n\times n$ in our example,
only $O(n)$ of the grid cells appear as grades of columns of the matrix.
On the other hand, it is not sufficient to only consider these grades 
in \texttt{Min\_gens} and \texttt{Ker\_basis}.
For instance, columns of $B$ on grades $(x',y)$ and $(x,y')$ with
$x'<x$ and $y'<y$ might combine into a kernel element at grade $(x,y)$,
so \texttt{Ker\_basis} has to perform work at grade $(x,y)$ even if
no column exists at this grade. An example for this is the matrix
$\Ex_1$ from Figure~\ref{fig:grid_illustration}, where a kernel
element appears on grade $(3,3)$ as a combination of $C$, $E$, and~$F$.

The main observation is that 
we can predict the grades on which the algorithm has
to (potentially) perform operations, 
effectively avoiding to iterate through all
grid cells. Surely, every grade that appears as a grade of matrix columns 
must be considered, to visit these columns for the first time. 
Moreover, consider the situation that the algorithm is at grade $(x,y)$ and reduces a column with index $i$.
Assume further that the pivot $j$ of $i$
appears already in $\rho$ for an index $k>i$. In that case, the LW-algorithm 
updates $\rho(j)\gets i$ and stops the reduction.
However, we know more: the next time that column $k$ is visited,
column $i$, or perhaps some other column with pivot $j$, will be added
to $k$. When is this next time? Since $i<k$ and the columns are in colex order,
we know that $y$, the $y$-grade of $i$, is smaller or equal $y'$, 
the $y$-grade of $k$. If $y'=y$, column $k$ will be handled in the same iteration, and nothing needs to be done. If $y'>y$, we know that the algorithm
needs to consider grade $(x,y')$.
In the example $\Ex_1$,
this case appears when considering column $C$ at grade $(3,2)$,
having the same pivot as $E$ at grade $(2,3)$, which means that grade $(3,3)$
needs to be considered.

Based on this idea, we set up a priority queue that stores the grid cells
that need to be visited, in lexicographic order. The queue is initialized
with the column grades of the matrix. 
Then, instead of iterating over all grid cells, the algorithms \texttt{Min\_gens}
and \texttt{Ker\_basis} keep
popping the smallest element from the queue until the queue is empty,
and proceed on each grade as described before.
We extend the reduction method of a column as follows (Alg.~\ref{alg:reduce_new}):
Whenever the algorithm encounters
a situation as above during a column reduction, it pushes $(x,y')$ 
to the queue.
Every element pushed to the queue is necessarily lexicographically
larger than the current element, so the algorithm terminates~-- in the worst
case after having handled every grid cell once, 
but skipping over many grid cells in practice.
In the example $\Ex_1$, the queue would be initialized with the
four grades $(1,1),(3,2),(1,3),(2,3)$, and only the grade $(3,3)$
would be pushed into the queue during the run.

A further improvement is based on a very similar idea:
note that when a grade $(x,y)$ is handled.
both \texttt{Min\_gens} and \texttt{Ker\_basis}
still scan through all columns of grade $(x',y)$ with $x'\leq y$
and reduce all columns in this range.
Since only a few columns in this range typically need an update,
most of the time in the algorithm is wasted for scanning through this range.

Necessary updates can be predicted during earlier steps in the algorithm:
as above, when, at grade $(x,y)$ 
column $i$ is reduced and its pivot is found in a column $k>i$,
we know that column $k$ needs an update. 
Let $y'\geq y$ be the $y$-grade of $k$.
We can just remember the index $k$ and handle it the next time
when $y$-grade $y'$ is visited (which will be for grade $(x,y')$). 

Technically, we realize this idea by storing one priority queue per $y$-grade.
In the extended column reduction, in a situation as above,
the index $k$ is pushed to the priority queue of its $y$-grade.
When handling a grade $(x,y)$, instead of 
scanning through the columns of grade $(1,y),\ldots,(x-1,y)$,
we keep popping the smallest index from
the priority queue of $y$ and reduce the column (this might introduce new
elements to the priority queue, if $y=y'$ with the notation from above, but new elements are of larger index, so the procedure
eventually empties the queue). After the queue is empty, the algorithm proceeds
with the columns on grade $(x,y)$ as in the LW-version. See Alg.~\ref{alg:min_gens_new}
and~\ref{alg:ker_basis_new} for pseudocode.

It is not difficult to see that these variants perform exactly the same
column operations as the original versions of \texttt{Min\_gens} and 
\texttt{Ker\_basis} (see Appendix~\ref{app:min_pres_algebra}). 
For instance, in the example matrix $\Ex_1$, visiting column $C$
triggers to push column $E$ in the priority queue for $y$-grade $3$.
At grade $(3,3)$, $E$ is popped from the queue (skipping column $C$),
and reducing $E$ leads to pushing column $F$ into the queue, which in turn
is reduced to $0$ as in the original algorithm.

In Table~\ref{tbl:queues}, we see a substantial improvement
both in time and memory when using priority queues.
In particular, the memory consumption is close to linear.
\begin{table}\centering
\begin{tabular}{c|cc|cc}
&\multicolumn{2}{c|}{Our RIVET ``clone''} & \multicolumn{2}{c}{Using queues}\\
n & Time & Memory & Time & Memory\\
\hline
7.5K & 3.76 & 468MB & 0.70 & 32MB\\
15K & 17.6 & 1.82GB & 2.72 & 72MB\\
30K & 71.6 & 7.17GB & 10.8 & 153MB\\
60K & 385 &  28.4GB & 54.6 & 370MB\\
\end{tabular}
    \caption{Running time and memory comparison
for the original algorithm and the variant using queues to avoid extensive scanning.}
\label{tbl:queues}
\end{table}

\ignore{
The advantage of the smart scanning method is larger the larger the grid of grades
is in $x$-direction (because the LW-algorithm scans horizontally at each grid element).
Therefore, the effect is very distinguished in the running example where the
number of $x$-coordinates is large. There is a simple improvement for the LW-algorithm
to just swap $x$- and $y$-coordinates in such cases; after that swap, 
we could not observe an advantage of smart scanning anymore for this example.
However, swapping the coordinates gets less effective the more square-like the grid becomes;
our smart scanning method, in contrast, is agnostic to the shape of the grid.
}

\paragraph{Lazy minimization.}
Despite the improvement using queues, Table~\ref{tbl:queues}
shows that the practical runtime complexity is still quadratic.
The reason for that lies in the minimization procedure:
In Table~\ref{tbl:lazy}, we show the running time of the sub-step
separately, and it is evident that it is the computational bottleneck
of the algorithm (when using the queue-improvement).

Recall that in the LW-minimization, whenever it identifies
a local pair, the algorithm scans to the right to eliminate the local row index.
This scan is responsible for the observed quadratic time complexity.
Typically, the row index only appears in a few columns, and the scan
will query many columns that are not updated. Additionally, looking for a fixed
row index is a non-constant operation for most representations of column data.
For instance, if the column is realized as a dynamic array, it requires
a binary search per column.

Our improvement avoids both scanning and binary search by not
eliminating local row indices immediately when they are identified as local
(this explains the name ``lazy''). Instead, we first determine all local row and column indices 
and then we remove them ``in bulk'' in a second step. 
As we prove in Appendix~\ref{app:min_pres_algebra}, 
despite performing a different set of
column additions in general, the resulting presentation matrix is still minimal
(this optimization has been already hinted at
in~\cite[Remark 4.4]{lw-computing}). The detailed algorithmic description
follows (Alg.~\ref{alg:minimize_new}):

Let $M'$ denote the semi-minimal presentation from the \texttt{Reparam} step.
Traverse its columns in increasing index order.
At column $i$, reduce the column (by adding columns from the left that remove its pivot),
but stop the reduction as soon as the column is not local anymore. 
If column $i$ is still local
after that reduction, let $j$ denote its pivot and label column $i$ and row $j$
as local. Then proceed with the next column.

Afterwards, iterate through all columns not labeled as local.
Traverse the non-zero indices of a column $i$, in decreasing order.
If a row index $j$ is not labeled as local, keep it and proceed with the
next (smaller) index. If $j$ is local, set $k\gets\rho(j)$. Add column
$k$ to column $i$, removing the local index $j$. This operation will not
change the row indices $>j$ because $j$ is the pivot of column $k$.
Proceed with the next row index smaller than $j$. At the end of this procedure,
column $i$ does not contain local row indices anymore. At the end of the
entire procedure, all non-local columns only contain non-local row indices.
Remove all local columns and local rows from $M'$, re-index rows
and columns, and return the 
resulting matrix as minimal presentation.

We apply our algorithm on the example matrix $\Ex_2$ from before.
In the first step of the algorithm, columns $A$, $C$, and $D$ get labeled
as local, as well as rows $U$, $V$, and $W$. Moreover, column $D$ is added to $E$, resulting
in $E\gets (1,1,1,1,0)^T$. The reduction stops because $E$ is not local anymore
after this addition.

In the second iteration, the algorithm re-visits the non-local columns $B$ and $D$.
For $B$, which has pivot $U$, it adds columns $A$, because ($(U,A)$ is a local pair).
This results in $(1,1,0,0,0)^T$, and no more column addition is performed
on $B$ because $S$ and $T$ are both non-local rows. For column $E$ (which has pivot $V$
at this point), column $C$ is added, resulting in $E=(1,0,0,0,0)$,
and the algorithm stops because $S$ is local. Note that the resulting columns $B$
and $E$ are the same as in the matrix $\Ex_2'$ with the LW-procedure.

Table~\ref{tbl:lazy} shows the effect of the new minimization procedure, 
essentially turning the minimization step from the computational bottleneck
to an operation whose running time is negligible. The memory consumption
of the algorithm does not change significantly, so we omit the numbers.
Interestingly, we observed that the total number of column additions
performed by the lazy minimization is typically slightly larger than in the original method
(unlike in our example matrix $\Ex_2$ above).
This underlines that the observed speed-up is due to avoiding scanning
the matrix repeatedly and not due to saving matrix operations. 

\begin{table}\centering
\begin{tabular}{c|cc|cc}
&\multicolumn{2}{c|}{LW-minimization} & \multicolumn{2}{c}{Lazy minimization}\\ 
n & Total & Minimize & Total & Minimize\\
\hline
7.5K & 0.70 & 0.56 & 0.15 & 0.00\\
15K & 2.72 & 2.40 & 0.33 & 0.01\\
30K & 10.8 & 10.1 & 0.73 & 0.02\\
60K & 54.6 & 53.1 & 1.62 & 0.07\\
\end{tabular}
    \caption{Running times (in seconds) of the algorithm with the original
and the lazy minimization procedure (both versions use the queue-optimization).}
\label{tbl:lazy}
\end{table}

\paragraph{Chunk preprocessing.}
We use the multi-chunk algorithm from~\cite{fk-chunk} as initial step
of our minimal presentation algorithm.
Strictly speaking, this is not an improvement of the LW-algorithm, as
it only replaces the input matrices $(A,B)$ with (smaller) matrices $(A',B')$
that yield an isomorphic persistence module. The basic idea is to identify
row-column-pairs in $A$ whose removal does not affect the persistence module,
just as in the minimization step. However, we try to do so early
in the process, hopefully removing a significant number of pairs
upfront, so that the subsequent steps of the algorithm can operate
on smaller matrices.

The details of the chunk algorithm have already been described 
in the lazy minimization step from above: 
just replace the matrix $M'$ with $A$.
Additionally, when removing row $j$ from $A$ at the end of algorithm,
remove column $j$ from $B$ as well.
After re-indexing, we obtain matrices
$(A',B')$ which are the output of the chunk reduction.

We see in Table~\ref{tbl:chunk_et_al} (first two lines per instance) 
that the chunk reduction is fast
and improves the performance of all subsequent steps, resulting in a speed-up of roughly
a factor of $2$ in this example. Perhaps more importantly, also the memory usage drops
by a factor $4$ (with both factors getting gradually better for
larger instances). 
The total number of columns of $A'$ and $B'$
is consistently around a quarter of those of $(A,B)$ 
which is in accordance to the memory savings.

\begin{table*}[h]\centering
\begin{tabular}{c|l|cccccc|cc|c}
n & Variant & IO & Ch & MG & KB & RP & Min & Time & Mem & Size\\
\hline
\multirow{3}{*}{200K}
& queue,lazy & 2.67 & - & 0.71 & 2.06 & 0.67 & 0.34 & 6.61 & 1.53GB & \multirow{3}{*}{(477.6, 476.6)}\\
& +chunk & 2.50 & 0.23 & 0.06 & 0.45 & 0.06 & 0.01 & 3.38 & 421MB\\
& +parfor & 2.53 & 0.22 & 0.06 & 0.46 & 0.03 & 0.01 & 3.38 & 420MB\\
\hline
\multirow{3}{*}{400K}
& queue,lazy & 5.55 & - & 1.70 & 4.74 & 1.53 & 0.81 & 14.5 & 3.49GB & \multirow{3}{*}{(698.4, 697.4)}\\
& +chunk & 5.18 & 0.49 & 0.18 & 1.07 & 0.13 & 0.03 & 7.19 & 835MB\\
& +parfor & 5.19 & 0.44 & 0.18 & 1.08 & 0.07 & 0.02 & 7.10 & 834MB\\
\hline
\multirow{3}{*}{800K}
& queue,lazy & 11.2 & - & 4.01 & 10.8 & 3.51 & 1.94 & 32.0 & 7.78GB & \multirow{3}{*}{(1022.0, 1021.0)}\\
& +chunk & 10.4 & 0.99 & 0.50 & 2.47 & 0.28 & 0.08 & 14.9 & 1.62GB\\
& +parfor & 10.4 & 0.88 & 0.49 & 2.49 & 0.13 & 0.05 & 14.7 & 1.61GB\\
\hline
\multirow{3}{*}{1.6M}
& queue,lazy & 21.9 & - & 9.36 & 24.9 & 8.54 & 4.84 & 70.8 & 17.7GB & \multirow{3}{*}{(1520.6, 1519.6)}\\
& +chunk & 20.3 & 2.00 & 1.40 & 5.60 & 0.64 & 0.20 & 30.6 & 3.07GB\\
& +parfor & 20.1 & 1.74 & 1.36 & 5.64 & 0.30 & 0.17 & 29.7 & 3.12GB\\
\end{tabular}
    \caption{Time and memory consumption 
of the algorithm (on the convex hull dataset) regarding chunk optimization and parallelization of for-loops.
Columns from left to right: Input size, algorithm that was used, running times in seconds for
setting up the graded matrices from input, and writing the result to output (IO), chunk preprocessing (Ch),
min-gens (MG), ker-basis (KB), reparameterization (RP), minimization (Min), total time (Time), memory consumption (Mem),
and the size of the output (Size). In the ``Variant'' column, the additions are cumulative, e.g, ``+chunk'' means that the option 
is used also in the subsequent line.}
\label{tbl:chunk_et_al}
\end{table*}

\paragraph{Parallelization.}
The main loops in the procedures \texttt{chunk\_preprocessing}, \texttt{reparam}, and \texttt{minimize} can be easily parallelized in shared memory:
removing the local row indices of a non-local column
only requires reading access to local columns and can be performed
independently for every non-local column. Similarly, re-expressing
a generator in terms of a kernel basis in \texttt{reparam}
can be done for all generators in parallel.

In Table~\ref{tbl:chunk_et_al}, the third line per instance shows the effect
of parallelization in the examples 
(we used \textsc{OpenMP} for the parallelization). 
We can see only a marginal effect for large instances.
However, we will show in Section~\ref{sec:further_experiments}
that parallelization brings a more substantial speed-up in other types
of instances.

\ignore{

ce of a slightly higher memory usage). 
Given that we run the algorithm in parallel with $12$ cores,
we could hope for an even better improvement, but at least, the experiment shows that
parallelization has a measurable effect.
}

The procedures \texttt{Min\_gens} and \texttt{Ker\_basis} do not permit
an easy parallelization scheme, but since they act independently
on the matrices $A$ and $B$, they can be run in parallel
on two cores (of course, yielding a speed-up factor of at most $2$). We do not provide tabular data
for this version, but it can be easily derived from the ``+parfor'' lines in Table~\ref{tbl:chunk_et_al},
just subtracting the time in the ``MG'' column (the memory consumption is unaffected by
this parallelization).

\paragraph{Clearing.}
We implemented one more optimization for \texttt{Ker\_basis}
reminiscent of the clearing optimization
in the single parameter case~\cite{ck-twist,bkr-clear}.
However, this clearing optimization takes no effect in our running
example. In Appendix~\ref{app:clearing}, we describe the optimization
and present one example with a (modest) speed-up.


\section{Further experimental evaluation.}
\label{sec:further_experiments}

In this section, we summarize experimental comparisons between the LW-algorithm
and our improved version. We consider many types of input,
since the asymptotic behavior of both the LW-algorithm and our improved version
can vary significantly across different input types.
For the function-Rips data (explained below), our improvements led to a constant factor
runtime improvement of around $10$ in the largest example;
for all other types of input, our improvements led to an asymptotic runtime improvement.

\paragraph{Function-Rips data.}
We created bi-graded data sets of various sizes in the following way:
we sampled $n$ points from a noisy circle and consider the simplicial
complex consisting of all $n$ points, $e=\binom{n}{2}$ edges
and $f=\binom{n}{3}$ triangles. 
Unlike in the previous example, the resulting matrices $(A,B)$ are
very imbalanced, and the size of the input is dominated by $f$,
the number of columns of $A$.
The first coordinate of the bi-grade
is $0$ for vertices, the distance between endpoints for edges, 
and the length of the longest edge for triangles.
This is the \emph{Vietoris-Rips complex}, one of the most prominent complexes
in topological data analysis. For the second grade coordinate,
we used a kernel density estimate with Gaussian kernel with fixed bandwidth
for the vertices, and this is extended to edges and triangles
by assigning the maximal value among the boundary vertices.
The construction ensures that the subcomplex at each grade
is a \emph{flag complex}, that is, a complex that is completely
determined by its vertices and edges.

Also by construction, the number of $x$-grades in these instances
is around $e$, and the number of $y$-grades is around $n$.
This implies that the grid size is comparable
to the number of columns. 

\ignore{

\begin{table}[h]\centering
\begin{tabular}{ccc|c|c}
n & e & f & (X,Y) & OUT\\
\hline
1.02M & 1.00M, 16.7K, 183 & 16.7K,182 & 66$\times$47 \\
2.05M & 2.03M, 26.6K, 231 & 26.6K,229 & 108$\times$76\\
4.11M & 4.06M, 42.2K, 291 & 42.2K,290 & 252$\times$213\\
8.17M & 8.10M, 66.8K, 366 & 66.8K,365 & 365$\times$308
\end{tabular}
    \caption{The statistics of the function-Rips benchmark data.}
\label{tbl:function_rips_statistics}
\end{table}

We see the characteristics of these datasets 
in Table~\ref{tbl:function_rips_statistics}; note that the number $m$
in this table is the number of sample points. We observe that
for inputs $(A,B)$ of that kind, the columns in $A$ is much larger
than the columns in $B$ (because $\binom{p}{3}\gg\binom{p}{2}$).
Also, the number of $x$-grades corresponds to the number of edges
and the the number of $y$-grades is close to the number
of vertices; both follows directly from our construction.
Finally, we observe that the resulting minimal presentation 
is of very small size compared to the input size. This once more underlines
the importance of computing the minimal presentation as a preprocessing step
in a pipeline of algorithms.

}

\begin{table*}[h]\centering
\begin{tabular}{c|l|cccccc|cc|c}
N & Variant & IO & Ch & MG & KB & RP & Min & Time & Mem & Size\\
\hline
\multirow{4}{*}{1.02M}
& rivet\_clone & 1.14 & - & 17.4 & 0.41 & 0.02 & 1.57 & 20.6 & 482MB & \multirow{4}{*}{(72.2, 48.2)}\\
& +queue,lazy & 1.14 & - & 5.24 & 0.02 & 0.02 & 0.00 & 6.46 & 488MB\\
& +chunk & 1.16 & 3.94 & 1.51 & 0.00 & 0.00 & 0.00 & 6.64 & 828MB\\
& +parfor & 1.16 & 1.81 & 1.66 & 0.00 & 0.00 & 0.00 & 4.67 & 1.00GB\\
\hline
\multirow{4}{*}{2.05M}
& rivet\_clone & 2.34 & - & 59.0 & 1.01 & 0.04 & 5.06 & 67.6 & 1.01GB & \multirow{4}{*}{(105.6, 67.8)}\\
& +queue,lazy & 2.32 & - & 13.1 & 0.05 & 0.04 & 0.00 & 15.6 & 1.01GB\\
& +chunk & 2.33 & 9.94 & 3.83 & 0.00 & 0.00 & 0.00 & 16.1 & 1.84GB\\
& +parfor & 2.32 & 4.34 & 4.16 & 0.00 & 0.00 & 0.00 & 10.9 & 2.27GB\\
\hline
\multirow{4}{*}{4.11M}
& rivet\_clone & 4.67 & - & 182 & 2.44 & 0.07 & 14.1 & 204 & 2.19GB & \multirow{4}{*}{(123.0, 85.4)}\\
& +queue,lazy & 4.65 & - & 33.4 & 0.09 & 0.06 & 0.01 & 38.3 & 2.20GB\\
& +chunk & 4.69 & 25.1 & 9.12 & 0.00 & 0.00 & 0.00 & 39.0 & 4.05GB\\
& +parfor & 4.62 & 10.3 & 9.64 & 0.00 & 0.00 & 0.00 & 24.7 & 5.02GB\\
\hline
\multirow{4}{*}{8.17M}
& rivet\_clone & 9.36 & - & 554 & 5.90 & 0.13 & 40.5 & 610 & 4.93GB & \multirow{4}{*}{(175.6, 121.0)}\\
& +queue,lazy & 9.33 & - & 93.5 & 0.17 & 0.12 & 0.02 & 103 & 4.94GB\\
& +chunk & 9.42 & 66.0 & 26.5 & 0.01 & 0.00 & 0.00 & 102 & 9.43GB\\
& +parfor & 9.44 & 25.6 & 27.7 & 0.01 & 0.00 & 0.00 & 63.2 & 12.1GB\\
\end{tabular}
\caption{Time and memory consumption for the function-Rips dataset.
  $N:=\ell+n$ is the total number of columns in $A$ and $B$.
The meaning of the other columns is the same as in Table~\ref{tbl:chunk_et_al}.}
\label{tbl:function_rips_runtime}
\end{table*}

The experimental results are displayed in Table~\ref{tbl:function_rips_runtime}.
We observe that the bulk of running time is spent in the \texttt{Min\_gens} procedure.
We also notice that our queue optimization still has a significant effect,
but not as strong as in the example from the last section. To explain this, note that
from the $\binom{p}{3}$ columns in $A$, only $\binom{p}{2}$ can give rise to a relation;
the remaining ones will be reduced to $0$ in \texttt{Min\_gens}, and a large part
of the algorithm is spent to perform these reductions; this part will not be short-cut
by our improvements. In other words, in comparison to the previous example,
more time is spent on actual matrix operations.

We also see that using chunk-preprocessing has hardly any effect on time, but roughly
doubles the memory consumption. This is not too surprising
because the number of row/column-pairs that are removed is insignificant with respect to the
total number of columns in $A$. 
That means that the pair $(A',B')$ is of the same size as $(A,B)$, and our implementation
deletes $(A,B)$ after having generated $(A',B')$, leading to a doubling of memory 
(a more careful implementation could avoid representing both matrix pairs at the same time
and save this memory overhead).
However, some speed-ups are obtained by parallelization
because some of the unavoidable reduction steps can be performed in parallel
using chunk preprocessing, although this leads to a further increase in memory
consumption.
In summary, we see a running time improvement
of a factor of around $10$ in the biggest example.

\paragraph{Additional mesh data.}
We also tested with triangular mesh data used in~\cite{fk-chunk},
publicly available at the AIM@SHAPE repository\footnote{available at \url{http://visionair.ge.imati.cnr.it/}}. 
The graded input matrices $(A,B)$ were generated in the same way as for the sphere meshes
in Section~\ref{sec:improvements}.

The outcome confirms the results from Section~\ref{sec:improvements}: the original version
runs out of memory for all but the smallest instances, and our improved version can
handle all meshes within a few seconds, using only a fraction of the memory.
For details, see Table~\ref{tbl:mesh_runtime} in Appendix~\ref{app:experiments_appendix}.

\paragraph{Random Delaunay triangulations.}
We tested a variant of the mesh example: we sampled $n$ points inside the unit sphere,
and we computed the Delaunay triangulation of these $n$ points (using \textsc{CGAL}~\cite{cgal:delaunay}).
As before, we assign to vertices their $x$- and $y$-coordinates as grades,
and to edges, triangles, and tetrahedra using the maximal grade of the boundary vertices.
We have two choices to generate a free implicit representation $(A,B)$, either using the vertices,
edges, and triangles, or using edges, triangles, and tetrahedra. These two options
correspond to computing a representation of (homology) dimension $1$ and $2$, and we refer to them
as the $d=1$ and $d=2$ case, respectively.

We tested both cases and list the numbers in Tables~\ref{tbl:random_del_1_small},
\ref{tbl:random_del_2_small}, \ref{tbl:random_del_1_large}, and \ref{tbl:random_del_2_large}
in Appendix~\ref{app:experiments_appendix}. 
In both cases, using queues and lazy minimization has a tremendous effect on the performance.
Remarkably, chunk preprocessing has a very different effect:
for $d=2$, it saves runtime (and yields an asymptotic improvement), whereas for $d=1$, it actually
increases the runtime. However, when parallelizing, the overall performance is better
with the chunk optimization.

\paragraph{Multi-cover filtrations}
We also tried our algorithm on bifiltrations coming from multi-covers of point clouds: 
the simplicial complex at grade $(r,k)$ represents the region of the plane
covered by at least $k$ balls of radius $r$ around the input points;
see~\cite{eo-multi} for more details.
Different from the previous examples, the grid for these examples
is very narrow, with only $10$ different grades in $y$-directions.
Despite the small-sized grid, using queues has a significant effect in practice,
as well as lazy minimization and chunk preprocessing. Interestingly, this is the only
type of example where we could see a (small) improvement when using the
clearing optimization, discussed in Appendix~\ref{app:clearing}.
We give numbers in Tables~\ref{tbl:multicover_small} and \ref{tbl:multicover_large}
in Appendix~\ref{app:experiments_appendix}.

\paragraph{Matrix data structure.}
Our implementation uses the \texttt{boundary\_matrix} data structure
of the \textsc{Phat} library to store graded matrices. One of the advantages
of that is that \textsc{Phat}'s matrix type is generic in the column representation.
In all our experiments above, we have used dynamic arrays (\texttt{vector\_vector} in \textsc{Phat}).
Our code allows one to simply exchange the column type with any of the $7$ other types that \textsc{Phat} provides~-- see~\cite{bkrw-phat} for a description of them. 

We repeated some of the previous experiments, trying all the different column types from \textsc{Phat},
producing data similar to~\cite[Table 1--7]{bkrw-phat}. 
The result is that the \texttt{vector\_vector} column type
is the fastest in most cases, although \texttt{bit\_tree\_pivot\_column} (which is 
the default representation in \textsc{Phat}) is competitive and sometimes faster,
at least when the algorithm runs in parallel. The experimental data
is in Table~\ref{tbl:column_compare} in Appendix~\ref{app:experiments_appendix}.

\section{Conclusion.}
\label{sec:conclusion}
We described several improvements to the Lesnick-Wright algorithm
that make the computation of minimal presentations of
bi-graded persistence modules very fast in practice.
Some of our improvements are reminiscent of similar techniques
in single-parameter persistence, namely the chunk preprocessing
and the clearing method. If one is interested in minimal presentations
in all homology dimensions, another form of clearing seems possible,
where the information in dimension $(d+1)$ is used to clear out columns
from $A$ without reducing them (as in~\cite{ck-twist} for one parameter).
For Vietoris-Rips filtrations, this works well together with dualizing
the filtration, that is, computing persistent cohomology instead~\cite{smv-duality},
culminating in specialized very fast implementations for Vietoris-Rips complexes~\cite{ripser}.
It is not clear how this ``duality trick'' can be carried over
to the multi-parameter setup.

\ignore{
While we demonstrated that parallelization can yield to speed-ups, the scaling factor
is rather disappointing (a factor of around $3$ at best with $12$ cores). 
We only used straight-word for-loop-parallelization using the \textsc{OpenMP} library;
perhaps the scaling can be improved with more sophisticated parallelization schemes.
}

We plan to integrate our algorithm into a larger pipeline for 2-parameter persistence.
In this, the minimal presentation is preceded by a method to generate
a bi-graded simplicial complex (possibly with approximate methods), and succeeded by
an algorithm to analyze the data set that the presentation represents.
We want to show that passing to a minimal presentation is a crucial step to
make such a pipeline feasible for realistic problem sizes.
The last columns of Tables~\ref{tbl:chunk_et_al} and \ref{tbl:function_rips_runtime}
show that minimal presentations are typically small compared to the input size,
giving us hope that subsequent algorithms will profit a lot from this simplification.

A further natural extension is to design a minimal presentation algorithm
for $3$ and more parameters.
An obstacle to this extension is that, in algebraic terms,
the kernel of a map between free modules need not be free for $3$ or more parameters.
This fact is crucial for the correctness of the \texttt{ker\_basis} method.
Dey and Xin~\cite{dx-generalized} describe
an algorithm to compute a presentation in the general case, but not necessarily
a minimal one, and an efficient implementation is missing.

Our approach is an instance of ``lossless compression'': the 
persistence module encoded in the output is isomorphic to the one of the 
input. In the single-parameter case, a popular research direction 
is the approximation of persistence diagrams, where the input is reduced
in a way that the resulting module is provably close to the original one
in terms of bottleneck distance (e.g.,\cite{sheehy-linear,dfw-computing,ckr-improved}). 
The prospects of this idea are unexplored in the multi-parameter setup.

\bibliographystyle{plain}
\bibliography{bib}

\clearpage

\begin{appendix}

\section{Simplicial bifiltrations: a running example}
\label{app:simplicial}

For readers not familiar with some of the concepts underlying this work,
we develop a running example on which we are illustrating most of the
basic concepts needed in this work. We are rather verbose here,
and readers familiar with basics of computational topology can
probably read over the first subparagraph quite quickly.

\paragraph{Simplicial complexes and homology}
A \emph{simplicial complex} $C$ over a finite vertex set $V$
is a collection of non-empty subsets of $V$ that is closed
under taking subsets. Elements of the complex of cardinality
$k+1$ are called \emph{$k$-simplices}, and $k$ is the dimension
of the simplex. Simplices of dimension $0$, $1$, and $2$,
are called \emph{vertices}, \emph{edges}, and \emph{triangles},
respectively. The dimension of a simplicial complex $C$
is the maximal dimension of its faces.
In Figure~\ref{fig:complex}, we see a $2$-dimensional simplicial
complex over $V=\{A,B,C,D,E\}$, consisting of $5$ vertices,
$7$ edges and $2$ triangles. Clearly, this is generalizing the notion
of a graph, as a graph is a simplicial complex of dimension $1$.
We call a $k$-simplex $\tau$ a \emph{facet} of a $(k+1)$-simplex $\sigma$
if $\tau\subset\sigma$.

\begin{figure}
\centering
\includegraphics[width=3cm]{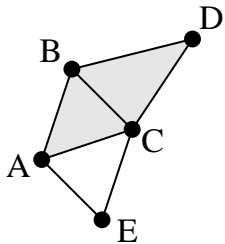}
\caption{Example of a simplicial complex.}
\label{fig:complex}
\end{figure}

Fixing a simplicial complex $C$ and a dimension $k$, 
the \emph{$(k+1,k)$-boundary matrix} is a matrix where every column
corresponds to a $(k+1)$-simplex of $C$, every row corresponds
to a $k$-simplex of $C$, and the $(i,j)$-entry is $1$ if
the $k$-simplex in row $i$ is a facet of the $(k+1)$-simplex
in column $j$. The simplicial complex depicted in Figure~\ref{fig:complex}
is fully described by the pair $(A,B)$ with $A$ its $(2,1)$-boundary matrix
and $B$ its $(1,0)$-boundary matrix, given as
\[A=\begin{array}{c|cc}
& ABC & BCD\\
\hline
AB & 1 & 0\\
AC & 1 & 0\\
AE & 0 & 0\\
BC & 1 & 1\\
BD & 0 & 1\\
CD & 0 & 1\\
CE & 0 & 0
\end{array}
\]
\[
B=\begin{array}{c|ccccccc}
& AB & AC & AE & BC & BD & CD & CE\\
\hline
A & 1 & 1 & 1 & 0 & 0 & 0 & 0\\
B & 1 & 0 & 0 & 1 & 1 & 0 & 0\\
C & 0 & 1 & 0 & 1 & 0 & 1 & 1\\
D & 0 & 0 & 0 & 0 & 1 & 1 & 0\\
E & 0 & 0 & 1 & 0 & 0 & 0 & 1
\end{array}.
\]
Generally, a simplicial complex
of dimension $d$ is described by $d$ such matrices.

Besides representing the complex, the matrices can be used to obtain further
information using linear algebra. For instance, the union of the two triangles
$ABC$, $BCD$ can be written as the vector $v:=(1,1)^T$, and matrix-vector
multiplication yields $Av=(1,1,0,0,1,1,0)$, corresponding to the edges
$AB$, $AC$, $BC$, $BD$, which are precisely the four edges bounding
the quadrilateral formed by the union of the two triangles.
Similarly, the three edges $AC$, $BC$, $BD$ form a path, and using the
vector $w:=(0,1,0,1,1,0,0)^T$, we get $Bw=(1,0,0,1,0)^T$, corresponding to 
the vertices $A$ and $D$, which are the endpoints of the path.
Note that in these examples, we exploit the fact that we interpret our
matrices to be over $K=\Z_2$; a generalization is possible, but requires
introducing orientations of simplices.

Using the three edges $AB$, $AC$, $BC$, we see that the corresponding vector $z_1$
satisfies $Bz_1=0$. This makes sense because the three edges form a cycle in the graph,
hence there is no ``boundary''. The same is true for the vector $z_2$ 
formed by $BC$, $BD$, $CD$, and this implies
that the matrix product $B\cdot A$ is the zero matrix. This is not a coincidence,
but the so-called \emph{fundamental lemma of homology}, stating this to be true
for every simplicial complex and and two boundary matrices in consecutive dimensions.
In other words, we have that the image of $A$ is contained in the kernel of $B$.
However, we can observe the kernel of $B$ is larger, since it also contains,
for instance, the vector $z_3$ formed by $AC$, $AE$, $CE$ (we leave the translation to the actual
vector in $\Z_2^7$ to the reader from now on).
We can furthermore observe that $\{z_1,z_2,z_3\}$ is a basis of the kernel of $B$,
which translates into the fact that every cycle in the graph can be obtained
as a symmetric difference of the $3$ corresponding cycles.
We are interested in the difference of the kernel of $B$ and the image of $A$,
given by the quotient space
\[
H_1:=\mathrm{ker} B / \mathrm{im} A
\]
which in this case is one-dimensional and can be represented
by the cycle $z_3$. This is called the \emph{homology group}
of the simplicial complex in dimension $1$. As we deal with a base
field (as opposed to integer coefficients as in the classical theory),
the homology group is in fact a vector space, but we still use the
standard notation. Intuitively, $H_1$ captures the cycles of the
simplicial complex which are ``non-trivial'', in the sense
that they are not bounded by a set of triangles. In our example,
this is clearly the case for $z_3$, but not for $z_1$ or~$z_2$.

\paragraph{Bi-filtrations and graded matrices}
We introduce more structure by giving every simplex a (bi-)grade
in $\Z^2$. This assigns a grade to each row and column of $A$ and $B$,
turning them into graded matrices. A possible assignment for
our running example would be

\[A=\begin{array}{cc|cc}
& & ABC & BCD\\
& & (2,2) & (3,3)\\
\hline
AB & (1,1) & 1 & 0\\
AC & (2,2) & 1 & 0\\
AE & (2,3) & 0 & 0\\
BC & (1,1) & 1 & 1\\
BD & (1,2) & 0 & 1\\
CD & (2,1) & 0 & 1\\
CE & (3,2) & 0 & 0
\end{array}
\]
\[
\tiny{
B=\begin{array}{cc|ccccccc}
& & AB & AC & AE & BC & BD & CD & CE\\
& & (1,1) & (2,2) & (2,3) & (1,1) & (1,2) & (2,1) & (3,2)\\
\hline
A & (1,1) & 1 & 1 & 1 & 0 & 0 & 0 & 0\\
B & (1,1) & 1 & 0 & 0 & 1 & 1 & 0 & 0\\
C & (1,1) & 0 & 1 & 0 & 1 & 0 & 1 & 1\\
D & (1,1) & 0 & 0 & 0 & 0 & 1 & 1 & 0\\
E & (2,2) & 0 & 0 & 1 & 0 & 0 & 0 & 1
\end{array}}.
\]
Perhaps more illustrative, the same data can be displayed
by a two-dimensional sequence of simplicial complexes
as depicted in Figure~\ref{fig:bifiltration}.
In here, at each grade $(x,y)$, we draw all simplices whose
grade is $\leq (x,y)$ in the defined partial order over $\R^2$.
The simplices whose grade is equal to $(x,y)$ are drawn in red.
Recall that we required that for a column with grade $g$, 
with an $1$-entry in a row with grade $g'$, $g'\leq g$ must hold.
This translates into the property that at every grade $g$, the
collection of simplices collected in this way forms a simplicial complex.
This data is called a \emph{bifiltration} of a simplicial complex
and can be used to model the evolution of data when two scale parameters
are varied independently from each other.

\begin{figure}
\centering
\includegraphics[width=8cm]{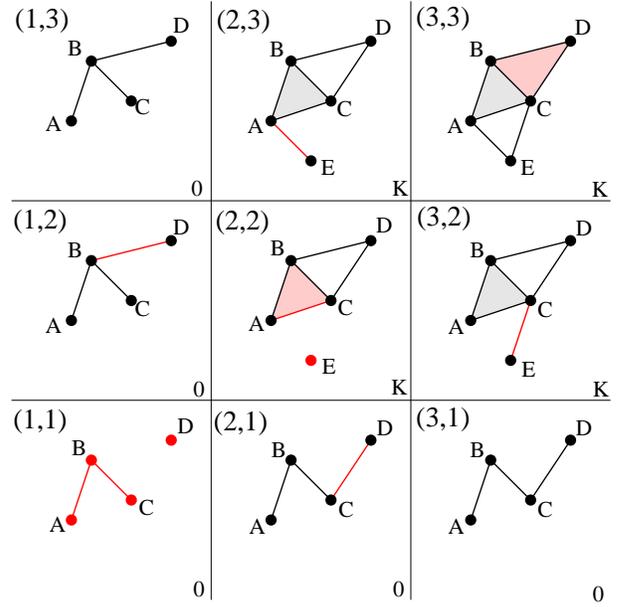}
\caption{Example of a bifiltration. The upper left corner denotes the grade,
the lower-right denotes the homology group of the complex.}
\label{fig:bifiltration}
\end{figure}

We can redo the same computation as before for any fixed grade now:
for instance, on grade $p:=(2,2)$, we obtain $A_{\leq p}$, which is $A$
from above with column $2$ and rows $3$ and $7$ removed.
$B_{\leq p}$ is $B$ with columns $3$ and $7$, and row $5$ removed.
The kernel of $B_{\leq p}$ is spanned by $z_1$ and $z_2$, the image
of $A_{\leq p}$ by $z_1$, and hence, the quotient group is one-dimensional,
represented by $z_2$. We can do similar calculations for each grade
and obtain the homology group for each grade, which is depicted in the
lower-right corners in Figure~\ref{fig:bifiltration}.
Revisiting the definition of the persistence module represented by the 
two graded matrices $(A,B)$, these homology groups are precisely the
vector spaces $V_p$ in the definition of Section~\ref{sec:definitions}.

We stress out the the persistence module contains more information
than just the homology on each grade. In some sense, the information
is encoded more in the maps between the grades than in the grades
themselves. As an example, set $p:=(2,2)$ and $p':=(3,3)$.
Recall that $\mathrm{ker} B_{\leq p}$ is generated by $z_2$.
The map $f_{p\to p'}$ is defined just by mapping $z_2$
to $z_2$ in  $\mathrm{ker} B_{\leq p'}$, which represents a trivial element
in the homology group because $z_2$ is bounded by a triangle at grade $(3,3)$.
That means that the generator at grade $(2,2)$ becomes trivial at grade $(3,3)$,
meaning that although the vector spaces are isomorphic at both grades,
the generators is changing. In contract, considering $p''=(3,2)$,
the map $f_{p\to p''}$ maps generator to generator, because $z_2$
generates the homology group at scale $(3,2)$.

We also point out that there is nothing special about the case of triangle,
edges, and vertices: the entire approach generalizes to higher dimensions
considering any two consecutive boundary matrices of a simplicial complex.

\paragraph{Conversion to a minimal presentation.}
We exemplify the conversion of $(A,B)$ as given above into a minimal 
presentation, which is the goal of our algorithm.
The main point is to identify a graded basis $Z$ of $\mathrm{ker} B$, that is,
a basis with a bi-grade for each element, such that $Z_{\leq p}$,
denoting the basis elements of $Z$ with grade $\leq p$, is a basis
of $\mathrm{ker} B_{\leq p}$ for all $p$. Such a basis always exists,
but this is non-trivial to prove (it follows from the fact that the kernel
of a map between free $K[x,y]$-modules is free, where $K[x,y]$ is the polynomial
ring with two variables). In our example, however, such a graded kernel can be
read off immediately: We pick $\{z_1,z_2,z_3\}$, where as above, $z_1$ is the 
cycle $AB$, $AC$, $BC$, $z_2$ is the cycle $BC$ $BD$, $CD$ and $z_3$ is the cycle
$AC$, $AE$, $CE$, and we assign the grades $(2,2)$ to $z_1$ and to $z_2$, 
and $(3,3)$ to $z_3$. The procedure \texttt{Ker\_basis} of Section~\ref{sec:lesnick-wright} 
yields such a graded basis in general.

Since $B_{\leq p} A_{\leq p}=0$, every column of $A$ at grade $p$ can be expressed
as a linear combination of the basis elements from above of grades $\leq p$.
In our case, this is very simple: The boundary of the triangle $ABC$ is equal to $z_1$,
and the boundary of the triangle $BCD$ is equal to $z_2$. Written as graded matrix,
this yields
\[
M'=\begin{array}{cc|cc}
& & ABC & BCD\\
& & (2,2) & (3,3)\\
\hline
z_1 & (2,2) & 1 & 0\\
z_2 & (2,2) & 0 & 1\\
z_3 & (3,3) & 0 & 0\\
\end{array}
\]
In general, this step requires to solve a linear system for every column, 
and the procedure \texttt{Reparam} of Section~\ref{sec:lesnick-wright} 
is solving these systems in general.

The matrix $M'$ is a presentation of the same persistence module as 
given by $(A,B)$. For example, at grade $p=(2,2)$, we restrict ourselves
to rows and columns of grade $\leq p$, which are column $1$
and row $1$ and $2$. Hence, using the definition from Section~\ref{sec:definitions}, 
we get
\[
V_p=\langle z_1,z_2\rangle / \langle z_1\rangle
\]
which means that $V_p$ is spanned by $z_2$. Similarly, for $q=(3,3)$,
we obtain $V_q$ to be spanned by $z_3$, and it can be readily checked
that the vector spaces, and maps in-between coincide at each grade
with the ones defined by the matrix pair $(A,B)$.

While it does not happen in our small example, it may happen that
columns of $B$ give rise to superfluous relations (=columns)
in the presentation matrix. This is the case whenever a column 
of grade $p$ can be
expressed as linear combination of other columns of grade $\leq p$.
The smallest such example would be an example including the $4$
boundary triangles of a tetrahedron on the same grade, 
where the last triangle added is just the sum of the other three.
The procedure \texttt{Min\_gens} filters out these superfluous
relations in the general case.

The matrix $M'$ is much succinct description of the persistence module
compared to $(A,B)$,
but it is still not minimal. The reason is that the kernel element $z_1$ is
created at scale $(2,2)$, but the the triangle $ABC$ is also created at $(2,2)$,
including the element into the image of $A$ (and hence trivializing it) right
away. In the example, we can just remove the first row and column and arrive at
\[
M=\begin{array}{cc|c}
& & BCD\\
& & (3,3)\\
\hline
z_2 & (2,2)  & 1\\
z_3 & (3,3)  & 0\\
\end{array}
\]
which is a minimal presentation. Indeed, re-considering Figure~\ref{fig:bifiltration},
we can summarize the situation as: there is a non-trivial cycle coming into existence
at grade $(2,2)$ which gets filled up at grade $(3,3)$, and there is another
non-trivial cycle starting at grade $(3,3)$, which is never filled up.

In general, we cannot just remove a row and column from a presentation: we first have
to ensure that every other column is re-expressed that the row to be removed
does not appear as row index. 
Mathematically, that means to change the basis of $\mathrm{im} B$; the procedure
\texttt{minimize} performs this removal in general.

\section{Correctness} \label{app:min_pres_algebra}

\paragraph{Correctness of lazy minimization.}
We first reformulate the definition of a minimal presentation of a persistence module
in terms of commutative algebra.
For more details on this perspective, see \cite{lw-computing}.
In this language, a \emph{persistence module} is a $\Z^2$-graded $K[x,y]$-module,
where $K = \Z_2$ as before,
and a \emph{presentation} of a persistence module $M$ is a homomorphism $p : F^1 \rightarrow F^0$
between free persistence modules such that $\mathrm{coker}(p) \cong M$.
We say that a \emph{basis} for a free persistence module is a minimal homogeneous set of generators.
Let $p$ be a presentation of $M$, and write $q : F^0 \rightarrow M$ for the canonical map;
the presentation is \emph{minimal} if
$(1)$ $q$ maps a basis of $F^0$ bijectively to a minimal homogenous set of generators of $M$, and
$(2)$ $p$ maps a basis of $F^1$ bijectively to a minimal homogenous set of generators of $\mathrm{ker}(q)$.
This definition agrees with the definition of \cite{lw-computing}
by the analogue for persistence modules of \cite[Lemma 19.4]{eisenbud}.
Any presentation of a persistence module $M$ can be obtained (up to isomorphism) from a minimal presentation $p$
by taking the direct sum with maps of the form $\mathrm{id}_H : H \rightarrow H$
or of the form $H \rightarrow 0$, where $H$ is free.
Following Lesnick-Wright, we say that a presentation is \emph{semi-minimal}
if it can be obtained from a minimal presentation by taking the direct sum with maps of the form
$\mathrm{id}_H : H \rightarrow H$, where $H$ is free.

If $p : F^1 \rightarrow F^0$ is a presentation of $M$, and we choose ordered bases $B^0$ and $B^1$ of $F^0$ and $F^1$,
then we can represent $p$ by a bi-graded matrix $P$,
and we abuse notation by also calling this bi-graded matrix a presentation of $M$.
We write $P(i,j)$ for the entry of $P$ in position $(i,j)$,
and we write $P(*,j)$ for the $j^{th}$ column of $P$.

\begin{proposition}
  If $P$ is a semi-minimal presentation of a persistence module $M$,
  and $P'$ is the bi-graded matrix obtained from $P$ by the lazy minimization algorithm,
  then $P'$ is a minimal presentation of $M$.
\end{proposition}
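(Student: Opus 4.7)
The plan is to pass through the auxiliary matrix $P'$, the state of the data after both passes but before deletion, and exhibit inside $P'$ an identity-like direct summand whose removal yields the returned matrix $C$. Concretely, I will (i) record that every column operation used is a change of basis of $F^1$ and so preserves $\mathrm{coker}$; (ii) identify a square submatrix $A$ of $P'$ indexed by the local rows and columns and show it is a graded automorphism; (iii) deduce that the returned $C$ presents $M$; and (iv) verify directly that $C$ has no nonzero entry at equal row- and column-grades, which is equivalent to $C$ being a minimal presentation.

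For (ii), let $L_r, L_c$ be the local rows and columns with bijection $\rho : L_r \to L_c$ built in the first pass. After the second pass, every non-local column is zero on every local row, giving the block form $P' = \left(\begin{smallmatrix} A & 0 \\ B & C \end{smallmatrix}\right)$ with rows $L_r, N_r$ and columns $L_c, N_c$. Listing $L_r$ in co-lex order as $r_1 < \cdots < r_K$ and $L_c$ correspondingly as $\rho(r_1), \ldots, \rho(r_K)$, the diagonal entries of $A$ are the pivots of the local columns and hence equal to $1$; for $j < i$, $A(i, j) = 0$ because $r_j$ is the co-lex-largest row appearing in $\rho(r_j)$ and $r_i > r_j$. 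So $A$ is upper triangular with unit diagonal, hence $\det A = 1$. Since paired rows and columns share grades, $A$ is a graded endomorphism of $\bigoplus_k K[x,y](-g_{r_k})$, and invertibility makes it a graded automorphism.

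For (iii), invertibility of $A$ yields a short direct argument that the composite $F^0_N \hookrightarrow F^0 \twoheadrightarrow M$ is surjective with kernel exactly $\mathrm{im}(C) \subset F^0_N$: surjectivity uses the local-column relations to rewrite each local generator of $M$ through $A^{-1} B$, and the kernel description uses injectivity of $A$ to force the local-column coefficients of any combination landing in $F^0_N$ to vanish. Hence $\mathrm{coker}(C) \cong M$. For (iv), let $i$ be any non-local column after the first pass. Its pivot $r'$ satisfies $g_{r'} \leq g^{(i)}$ componentwise and $g_{r'} \neq g^{(i)}$, so $g_{r'} <_{\mathrm{colex}} g^{(i)}$; pivot-maximality then forces every other nonzero row of $i$ to have co-lex grade strictly below $g_{r'}$, a fortiori not equal to $g^{(i)}$. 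In the second pass the only columns ever added to $i$ are of the form $\rho(j)$ for $j$ a local row present in $i$; inductively, each such $j$ has $g_j <_{\mathrm{colex}} g^{(i)}$, so every entry of $\rho(j)$ sits at a row of co-lex grade $\leq g_j$ and in particular not equal to $g^{(i)}$. Thus no non-local column ever acquires an entry at a row whose grade equals its own, and this property is inherited by $C$.

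The delicate point is respecting the distinction between the componentwise partial order on $\Z^2$ (which defines locality) and its co-lex total refinement (which defines pivots), and using the implication \emph{non-local pivot has grade co-lex-strictly below the column grade} to control every remaining entry of the column. Once that distinction is handled, the triangularity and grade matching of $A$ together with the propagation of the ``no equal-grade entries'' invariant through the second pass yield both halves of the proof with only routine bookkeeping.
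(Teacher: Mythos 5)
Your steps (i)--(iii) are sound and give a nice alternative to the paper's argument that the output still presents $M$: the paper removes local row--column pairs one at a time (in decreasing row order) via explicit changes of basis of $F^0$, whereas you split off the whole local block at once, using that the local-rows-by-local-columns block $A$ is co-lex upper triangular with unit diagonal (hence a graded automorphism, since paired grades agree) and that the second pass zeroes the (local rows)\,$\times$\,(non-local columns) block. That Schur-complement-style step is correct.

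The gap is in (iv). The condition ``no nonzero entry at equal row- and column-grade'' is \emph{not} equivalent to minimality; it is equivalent only to the first half of the definition used in the paper, namely that the image of the presentation map lies in $\mathfrak{m}F^0$ (the generators are minimal in number). Minimality additionally requires that the columns form a minimal generating set of $\ker(F^0\to M)$, i.e.\ $\ker(F^1\to F^0)\subseteq\mathfrak{m}F^1$, and this cannot be read off from the shape of the matrix. Concretely, the $1\times 2$ graded matrix $(x\;\;x^2)$ (row at grade $(0,0)$, columns at $(1,0)$ and $(2,0)$) has no local columns, so lazy minimization returns it unchanged; it has no equal-grade entries, yet it is not a minimal presentation of $K[x,y]/(x)$. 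This also explains why your proof never invokes the hypothesis that $P$ is semi-minimal: as written it would prove the (false) statement for arbitrary presentations. The repair is to use semi-minimality: it gives $\ker(P)\subseteq\mathfrak{m}F^1$, and this property is preserved by your column operations (a graded automorphism of $F^1$) and passes to the summand $C$ after splitting off the invertible block $A$, so $\ker(C)\subseteq\mathfrak{m}F^1_N$ and, together with your (iv), $C$ is minimal. Alternatively, argue as the paper does: show that at each grade $\mathbf z$ the lazy algorithm removes exactly $\operatorname{rank}P_{\mathbf z}$ row--column pairs, the same number as the LW minimization, which is known to output a minimal presentation.
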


\begin{proof}
  We first show that $P'$ is still a presentation of $M$.
  For this, we observe first that if $k$ and $j$ are column indices of $P$,
  with $k \neq j$ and such that the grade of column $k$ is less than or equal to the grade of column $j$
  (in the usual partial order on $\Z^2$),
  then adding column $k$ of $P$ to column $j$ corresponds to multiplying $P$ on the right with a bi-graded
  elementary matrix, and therefore does not change the isomorphism type of the cokernel.
  It follows that the column operations performed by the lazy minimization algorithm do not change
  the isomorphism type of the cokernel.

  Now, if $P$ has a row $q$ and a column $j$ that both have grade $\textbf{z} = (z_1, z_2)$, such that $P(q, j) = 1$,
  and every other entry in row $q$ and column $j$ is zero, then we can remove row $q$ and column $j$
  from $P$ without changing the isomorphism type of the cokernel;
  this corresponds to removing a direct summand of the form
  $\mathrm{id}_H : H \rightarrow H$, where $H$ is a free persistence module on one generator at grade $\textbf{z}$.
  Furthermore, the assumption that column $j$ has only one non-zero entry can be weakened.
  Assume now that $P$ has a column $j$ with pivot $q$,
  both with grade $\textbf{z}$,
  and such that $P(q,k) = 0$ for all $k \neq j$.
  Let $C = (q_1, \dots, q_r, q)$ be the row indices in which column $j$ is non-zero,
  and say that $P$ represents a homomorphism $p : F^1 \rightarrow F^0$,
  with respect to ordered bases $B^0 = (B^0_1, \dots, B^0_m)$ and
  $B^1 = (B^1_1, \dots, B^1_n)$.
  Define a basis $\tilde{B}^0 = (\tilde{B}^0_1, \dots, \tilde{B}^0_m)$ of $F^0$,
  where $\tilde{B}^0_t = B^0_t$ for $t \neq q$, and
  \[
  \tilde{B}^0_q = \sum_{s \in C} x^{z_1 - \gr(s)_1} y^{z_2 - \gr(s)_2} B^0_s \; .
  \]
  If $\tilde{P}$ represents $p$ with respect to the bases $\tilde{B}^0$ and $B^1$,
  then $\tilde{P}(*,k) = P(*,k)$ for all $k \neq j$,
  and $\tilde{P}(*,j)$ has a $1$ in row $q$ and zero elsewhere.
  So, by first replacing $B^0$ with $\tilde{B}^0$,
  we can remove row $q$ and column $j$ from $P$ without changing the isomorphism type of the cokernel.
  In the last step of the lazy minimization algorithm,
  in which all marked row-column pairs are removed,
  if we remove these row-column pairs in order of decreasing row index,
  then the previous argument shows that each row-column removal does not change the isomorphism type of the cokernel.
  This completes the proof that $P'$ is still a presentation of $M$.

  Finally, we show that $P'$ is minimal.
  Since the LW minimization algorithm produces a minimal presentation from $P$,
  it suffices to show that the lazy minimization algorithm removes the same number of row-column pairs
  at each grade as the LW minimization algorithm.
  So, let $\textbf{z}$ be a grade of a column of $P$, and let $P_{\textbf{z}}$ be the sub-matrix of $P$
  consisting of the columns and rows of $P$ with grade $\textbf{z}$.
  To complete the proof, we show that 
  the number of row-column pairs with grade $\textbf{z}$ removed by the LW minimization algorithm
  and the number of row-column pairs with grade $\textbf{z}$ removed by the lazy minimization algorithm
  are both equal to the rank of the matrix $P_{\textbf{z}}$.
  Say that a matrix $N$ has the ``distinct pivot'' property if any pair of non-zero columns of $N$ have
  distinct pivots. Note that if $N$ has the distinct pivot property, then the rank of $N$ is the number
  of non-zero columns of $N$.
  The column operations performed by the LW minimization algorithm do not change the rank of $P_{\textbf{z}}$,
  and after performing all column operations, the sub-matrix $P_{\textbf{z}}$
  has the distinct pivot property, and the columns with grade $\textbf{z}$
  removed by the LW minimization algorithm are the non-zero columns of this matrix.
  Similarly, the column operations performed by the lazy minimization algorithm do not change the rank of $P_{\textbf{z}}$,
  and after performing all column operations, the sub-matrix $P_{\textbf{z}}$
  has the distinct pivot property, and the columns with grade $\textbf{z}$
  removed by the lazy minimization algorithm are the non-zero columns of this matrix.
\end{proof}

\paragraph{Correctness of queuing.}
The correctness of the two techniques using priority queues follows more or less directly
from the description given in the main body. We still give a formal proof for completeness.
We start with the priority queue for the grades, assuming that each grade is handled
by scanning all columns from left to right as in the RIVET algorithm.
\begin{proposition}
Using a priority queue for the grades as described, the algorithms \texttt{min\_gens}
and \texttt{ker\_basis} produce the same output as in the LW algorithm.
\end{proposition}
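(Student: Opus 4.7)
The plan is to prove by strong induction on grid cells in lex order that, at every point of the execution, the queue-based algorithm and the LW algorithm maintain the same state, meaning the same column contents together with the same pivot map $\rho$. Once this invariant is established, the output of \texttt{min\_gens} and \texttt{ker\_basis} agrees in both versions, since the output is determined by this state.

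The central lemma is that every grade $(x,y)$ at which LW performs a non-trivial column operation is present in the priority queue by the time the queue-based algorithm would reach $(x,y)$. To prove this, I would classify the operation by its origin. A column addition at $(x,y)$ reduces some column $k$ whose current pivot $j$ satisfies $\rho(j)=i$ with $i<k$. Either (i)~$k$ has grade $(x,y)$, in which case $(x,y)$ is an original column grade and is in the initial queue, or (ii)~the entry $\rho(j)=i$ was overwritten at an earlier cell $(x',y')$ when a column $i<k$ with pivot $j$ was processed. Since $i$ precedes $k$ in colex order, the $y$-grade of $i$ is at most the $y$-grade of $k$; therefore at cell $(x',y')$ the extended reduction routine (Alg.~\ref{alg:reduce_new}) pushes the grade $(x',y_k)$ into the queue, where $y_k$ is the $y$-grade of $k$. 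By the induction hypothesis applied to all smaller cells, popping this pushed cell causes $k$ to be re-reduced, producing the same state as LW would have at the same cell.

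For the converse direction, I would argue that any grid cell skipped by the queue-based algorithm is a cell at which LW would have performed only trivial reductions. Entering such a cell with state matching LW's state (by induction), LW performs a non-trivial operation only if some earlier update to $\rho$ justifies one; by the lemma, this would have put the cell in the queue, contradicting the assumption that it was skipped.

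The main technical obstacle I anticipate is the careful treatment of \emph{silent} updates to $\rho$, where an incoming column $i$ with pivot $j$ finds $\rho(j)=k$ for some $k>i$ and updates $\rho(j)\gets i$ without performing any column addition. These silent updates still have to trigger a push so that the subsequent re-reduction of $k$ is not missed. The critical step is to verify that Alg.~\ref{alg:reduce_new} pushes in exactly these situations, and that the pushed cell $(x',y_k)$ is strictly lex-larger than the current cell (ensuring termination of the priority-queue loop and preserving the induction's well-foundedness).
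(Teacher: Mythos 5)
Your plan follows essentially the same route as the paper's proof: induct over grid cells in lex order, maintain that both algorithms have identical matrix/pivot state, and reduce everything to the lemma that every ``significant'' grade of the LW run is pushed into the queue, with silent overwrites of $\rho$ being the events that trigger pushes. The one substantive step, however, is exactly the one you defer to the end as ``the critical step to verify'': that the pushed grade $(x',y_k)$ coincides with the grade at which LW actually performs the deferred operation on $k$. Your forward argument needs that $(x',y_k)$ is the \emph{first} cell lex-after the overwrite at which LW revisits $k$ (this uses $x'\geq x_k$, which holds because $k$ must already have been visited for $\rho(j)=k$ to be set, and $y'\leq y_k$ from colex order). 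The paper closes the same gap from the other direction: fixing the grade $(x,y)$ where LW operates on a column $i$ of grade $(x',y)$, it observes that between $i$'s consecutive visits at $(x-1,y)$ and $(x,y)$ the intervening cells with $x$-coordinate $x-1$ only touch columns of index larger than $i$ (by colex order), so $\rho$ cannot be lowered there; hence the overwrite occurs at a cell with $x$-coordinate exactly $x$, and the pushed grade is literally $(x,y)$. Either direction works, but without this localization the induction does not close, so you should carry it out rather than flag it. Two smaller points: your argument mixes in the per-$y$-grade column queues, which the paper treats as a separate second proposition (this proposition assumes each popped grade is still handled by a full left-to-right scan); and for \texttt{ker\_basis} the output is not determined by the final state alone — the recorded grade of a kernel element depends on \emph{which cell} the column becomes zero at, so you need the ``same operations at the same cells'' form of the invariant, which your cell-indexed induction does provide but your closing sentence of the first paragraph understates.
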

\begin{proof}
The argument is mostly the same for \texttt{min\_gens} and \texttt{ker\_basis},
and we just talk about ``the algorithm'' for either of them. More precisely, we 
refer to the LW version of the algorithm and the optimized version of the algorithm
when talking about the variant without and with the priority queue, respectively.

We call a grade $(x,y)$ \emph{significant} if the algorithm, on grade $(x,y)$
performs a column operation on the matrix or appends a column to the output matrix.
We show that every significant grade in the LW algorithm is added to the priority
queue in the optimized version. That proves that the outcome is the same using an inductive argument.

Fix a grade $(x,y)$ and assume that the algorithm (in the LW version) performs
a column operation in this iteration. Let $i$ be the smallest index on which
such an operation is performed, let $j$ be its pivot, 
and let $(x',y)$ be its grade with $x'\leq x$.
If $x'=x$, then there is a matrix column with grade $(x,y)$ and the grade
is pushed into the grade priority queue initially. Otherwise, if $x'<x$,
column $i$ has been reduced previously in grade $(x-1,y)$, and the reduction
ended with $\rho(j)=i$ (otherwise, a further column addition would have been performed).
The fact that a column addition is needed at grade $(i,j)$ means that $\rho(j)$ must
have been re-set, to an index smaller than $i$. However, between $(x-1,y)$ and $(x,y)$,
the algorithms iterated over the grades $(x-1,y+1),(x-1,y+2),\ldots,(x-1,Y),(x,1),(x,2),\ldots (x,y-1)$
with $Y$ the maximal $y$-grade. Note that in the first part of the sequence, with $x$-grade $x-1$,
only columns with index $>i$ are updated because the matrix is stored in colex order.
Hence, none of these iterations can set $\rho(j)$ to a smaller index.
It follows that the update of $\rho(j)$ happens at a grade with $x$-grade $x$.
But then, the updated reduction algorithm ensures when $\rho(j)$ is updated,
the grade $(x,y)$ is added to the priority queue in this step.

It remains the case that at grade $(x,y)$, the algorithm appends an output element. 
If the algorithm is \texttt{min\_gens}, this only happens when the grade appears as matrix
column, and as argued earlier, these grades are added to the priority queue.
For \texttt{ker\_basis}, an output column is added if a column is reduced from
non-zero to zero, and this implies that at least one column addition was performed, so
the grade is considered by the first part.
\end{proof}

For the second queuing optimization (to avoid the left-to-right scan at each grade),
the proof strategy is very similar:
\begin{proposition}
Using one priority queue per $y$-grades as described, the algorithms \texttt{min\_gens}
and \texttt{ker\_basis} produce the same output as in the LW algorithm.
\end{proposition}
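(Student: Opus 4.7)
The plan is to adapt the inductive strategy of the previous proposition, now tracking not only which grades are visited but also which columns are reduced at each grade. I would proceed by induction on the lexicographic order of grades, maintaining two invariants immediately before the optimized algorithm begins processing a grade $(x,y)$: (i) the matrix and the pivot map $\rho$ agree with their state in the LW-algorithm at the corresponding point; and (ii) the priority queue associated with $y$-grade $y$ contains exactly those column indices $k$ with $y$-grade $y$ and $x$-grade strictly less than $x$ on which the LW-algorithm would perform at least one column operation during the processing of $(x,y)$. Columns whose own grade is $(x,y)$ are handled directly by both variants and need no queueing.

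To establish invariant (ii), I would argue that in LW a non-trivial reduction of such a column $k$ at $(x,y)$ occurs precisely when $k$'s current pivot $j$ satisfies $\rho(j) = i < k$ at the start of processing $(x,y)$. This inequality must have arisen from an update $\rho(j) \gets i$ performed at some grade $(x',y')$ strictly preceding $(x,y)$ in lex order, but not earlier than the last modification of $k$. The extended reduction procedure pushes $k$ into the queue for its $y$-grade exactly when $\rho(j)$ is overwritten by a smaller index. Since grades are processed in lex order and columns are pushed into the queue of their own $y$-grade, the element $k$ remains in the queue until $y$-grade $y$ is next visited, which by the induction hypothesis and the first proposition occurs at grade $(x,y)$.

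Once the invariants hold at the start of grade $(x,y)$, I would verify that the optimized and LW versions produce identical matrices and pivot maps after processing $(x,y)$. The priority queue pops column indices in increasing order, matching LW's left-to-right scan; any new indices pushed during the processing are strictly larger than the current one, ensuring termination and agreement with LW's sweep behavior. Columns of grade exactly $(x,y)$ are then processed identically in both versions, and columns whose LW-reduction would perform no operation are simply omitted from the optimized pass, which is precisely the intended shortcut.

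The main obstacle I anticipate is the bookkeeping in proving (ii), particularly when $\rho(j)$ is overwritten multiple times between two successive visits to $y$-grade $y$: one must verify that pushing $k$ at the first overwrite suffices, and that further overwrites preserve the invariant. Allowing duplicate entries in the priority queue, and interpreting an attempt to reduce a no-longer-problematic column as a harmless no-op, simplifies the argument, but the details of how pushed-but-stale entries interact with freshly required pushes need to be spelled out carefully to close the induction.
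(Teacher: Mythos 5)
Your proposal is correct and follows essentially the same route as the paper: an induction showing that every column on which the LW-algorithm performs an operation at grade $(x,y)$ is pushed into the queue for $y$-grade $y$ at the moment its pivot entry $\rho(j)$ is overwritten by a smaller index. Your extra care about duplicate and stale queue entries (treated as harmless no-ops) is a reasonable refinement of details the paper's proof leaves implicit, but it does not change the argument.
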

\begin{proof}
Again, we argue inductively that the same column operations are performed
in the variant without and with priority queues. Fix a column $i$
for which the LW variant (without priority queues) performs a column operation
for grade $(x,y)$. It suffices to show that $i$ is pushed to the priority queue
for $y$. If $i$ is at grade $(x,y)$, it is pushed by the algorithm as specified,
so we can assume that its grade is $(x',y)$ with $x'<x$.

Let $j$ be the pivot of column $i$. As in the previous proof,
the fact that a column addition is performed means that $\rho(j)$ has been updated
during the algorithm since $i$ was visited at grade $(x-1,y)$, and this can only
happen at a grade $(x,y')$ with $y'\leq y$. By the modified reduction method,
$i$ will be pushed into the priority queue of $y$ in this case.
\end{proof}

\section{Additional experimental data}
\label{app:experiments_appendix}

In this section, we present further details on the experiments discussed in
Section~\ref{sec:further_experiments}.

\paragraph{Function-Rips data.}

Along with the function-Rips bifiltration on a noisy circle point cloud,
which was discussed in Section~\ref{sec:further_experiments},
we also considered the function-Rips bifiltration on a noisy $2$-sphere in $\R^3$,
and created a persistence module by taking homology in dimension $2$.
The results are in Table~\ref{tbl:function_rips_2_sphere}.
As was the case with the noisy circle,
the runtime of the LW algorithm is dominated by \texttt{Min\_gens}.
And again, since some of these reductions can be performed during chunk preprocessing,
which can be done in parallel, we obtain a running time improvement of a factor of around $8$
in the largest example.

\paragraph{Mesh data from AIM@SHAPE.}

In Table~\ref{tbl:mesh_statistics} we display information about the size of the datasets we consider.
The numbers $\ell$,$n$,$m$ give the size of the input, as in Section~\ref{sec:definitions},
and $X$,$Y$ are the number of grades, as in Section~\ref{sec:lesnick-wright}.
Time and memory consumption are displayed in Table~\ref{tbl:mesh_runtime}.
The rivet clone runs out of memory for all but the smallest instance,
due to the large grid that needs to be created without the sparse grid optimization.
Similar to the examples in Section~\ref{sec:improvements},
we see a clear advantage of using chunk preprocessing, both with respect to time and memory.
On the other hand, neither parallelization nor the clearing optimization (not shown in the table) 
seem to improve the algorithm's performance for these instances.

\paragraph{Random Delaunay triangulations, $d=2$.}
It might be surprising at first that the resulting presentation for this case is empty for all instances
(Tables~\ref{tbl:random_del_1_small} and~\ref{tbl:random_del_1_large}).
To explain this, note that every generator in such a presentation corresponds
to a void in the data at some grade $p=(x,y)$ which is bounded by a set of triangles of grade $\leq p$.
Set $P:=(-\infty,x)\times (-\infty,y)\times \R$.
In other words, the void is a polyhedron whose vertices lie in $P$. Since $P$ is a convex set,
this implies that the void is contained in $P$ as well. On the other hand, the void is 
triangulated by tetrahedra which are not present at grade $p$, as otherwise there would not be a void. 
But this is a contradiction, since for any such tetrahedron, its vertices are all in $P$,
and hence so is the tetrahedron, by the way the grades are assigned. It follows that such a void
does not exist.

\begin{table*}[h]\centering
\begin{tabular}{c|l|cccccc|cc|c}
N & Variant & IO & Ch & MG & KB & RP & Min & Time & Mem & Size\\
\hline
\multirow{4}{*}{271K}
& rivet\_clone & 0.29 & - & 11.1 & 0.08 & 0.03 & 1.60 & 13.1 & 336MB & \multirow{4}{*}{(7.8, 4.2)}\\
& +queue,lazy & 0.31 & - & 10.1 & 0.06 & 0.03 & 0.00 & 10.5 & 336MB\\
& +chunk & 0.30 & 9.85 & 0.12 & 0.00 & 0.00 & 0.00 & 10.3 & 406MB\\
& +parfor & 0.31 & 2.32 & 0.11 & 0.00 & 0.00 & 0.00 & 2.78 & 424MB\\
\hline
\multirow{4}{*}{522K}
& rivet\_clone & 0.56 & - & 41.0 & 0.20 & 0.06 & 5.72 & 47.6 & 861MB & \multirow{4}{*}{(8.6, 5.6)}\\
& +queue,lazy & 0.56 & - & 37.0 & 0.15 & 0.06 & 0.01 & 37.8 & 863MB\\
& +chunk & 0.57 & 35.7 & 0.47 & 0.02 & 0.00 & 0.00 & 36.8 & 1.05GB\\
& +parfor & 0.60 & 7.32 & 0.35 & 0.00 & 0.00 & 0.00 & 8.31 & 1.11GB\\
\hline
\multirow{4}{*}{1.03M}
& rivet\_clone & 1.10 & - & 136 & 0.50 & 0.12 & 19.2 & 157 & 2.20GB & \multirow{4}{*}{(11.0, 6.8)}\\
& +queue,lazy & 1.11 & - & 122 & 0.38 & 0.12 & 0.02 & 123 & 2.20GB\\
& +chunk & 1.11 & 117 & 1.66 & 0.10 & 0.00 & 0.00 & 120 & 2.70GB\\
& +parfor & 1.14 & 21.9 & 1.05 & 0.00 & 0.00 & 0.00 & 24.2 & 2.91GB\\
\hline
\multirow{4}{*}{2.12M}
& rivet\_clone & 2.30 & - & 573 & 1.46 & 0.30 & 70.9 & 648 & 6.39GB & \multirow{4}{*}{(13.2, 9.0)}\\
& +queue,lazy & 2.32 & - & 508 & 1.18 & 0.30 & 0.08 & 512 & 6.40GB\\
& +chunk & 2.32 & 489 & 5.78 & 0.55 & 0.00 & 0.00 & 498 & 7.63GB\\
& +parfor & 2.38 & 84.7 & 2.74 & 0.00 & 0.00 & 0.00 & 90.0 & 8.15GB\\
\hline
\multirow{4}{*}{4.25M}
& rivet\_clone & 4.66 & - & 2090 & 4.41 & 0.75 & 299 & 2400 & 17.2GB & \multirow{4}{*}{(21.0, 13.0)}\\
& +queue,lazy & 4.67 & - & 1839 & 3.74 & 0.75 & 0.28 & 1849 & 17.2GB\\
& +chunk & 4.71 & 1761 & 23.0 & 0.68 & 0.00 & 0.00 & 1790 & 20.7GB\\
& +parfor & 4.85 & 294 & 10.5 & 0.01 & 0.02 & 0.00 & 310 & 22.4GB\\
\end{tabular}
\caption{Time and memory consumption for the function-Rips bifiltration on a noisy $2$-sphere.
  $N:=\ell+n$ is the total number of columns in $A$ and $B$.
      The meaning of the other columns is the same as in Table~\ref{tbl:chunk_et_al}.}
\label{tbl:function_rips_2_sphere}
\end{table*}

\begin{table}[h]\centering
\begin{tabular}{c|c|c|c}
name & $\ell$,$n$,$m$ & $X$,$Y$ & Size\\
\hline
hand   & 69K,  103K, 34K  & 33K,34K & 595$\times$596\\
eros   & 953K, 1.4M, 477K& 55K,58K & 3K$\times$3K \\
dragon & 1.3M,2.0M, 656K & 139K,189K & 8K$\times$8K\\
raptor & 2.0M,3.0M, 1.0M & 565K,523K & 7K$\times$7K
\end{tabular}
\caption{The statistics of the triangular mesh benchmark data.
  The numbers $\ell$,$n$,$m$ give the size of the input,
  $X$,$Y$ are the number of grades, and ``Size''
  is the size of the output.}
\label{tbl:mesh_statistics}
\end{table}

\begin{table*}[h]\centering
\begin{tabular}{c|l|cccccc|cc}
name & Variant & IO & Ch & MG & KB & RP & Min & Time & Mem\\
\hline
\multirow{3}{*}{hand}
& rivet\_clone & 0.37 & - & 34.2 & 37.8 & 0.07 & 12.0 & 84.8 & 8.79GB\\
& +queue,lazy & 0.37 & - & 0.08 & 0.32 & 0.07 & 0.03 & 0.91 & 147MB\\
& +chunk & 0.37 & 0.02 & 0.00 & 0.04 & 0.00 & 0.00 & 0.46 & 76MB\\
& +parfor & 0.44 & 0.03 & 0.00 & 0.04 & 0.00 & 0.00 & 0.63 & 76MB\\
\hline
\multirow{2}{*}{eros}
& queue,lazy & 3.15 & - & 1.64 & 11.8 & 1.41 & 0.81 & 19.0 & 2.94GB\\
& +chunk & 3.20 & 0.47 & 0.16 & 1.37 & 0.11 & 0.02 & 5.4 & 873MB\\
& +parfor & 3.50 & 0.43 & 0.16 & 1.36 & 0.05 & 0.02 & 5.64 & 873MB\\
\hline
\multirow{2}{*}{dragon}
& queue,lazy & 5.58 & - & 2.84 & 14.0 & 2.03 & 1.18 & 26.0 & 3.40GB\\
& +chunk & 5.34 & 0.78 & 0.34 & 2.21 & 0.20 & 0.06 & 9.05 & 1.17GB\\
& +parfor & 5.70 & 0.70 & 0.32 & 2.23 & 0.11 & 0.05 & 9.23 & 1.17GB\\
\hline
\multirow{2}{*}{raptor}
& queue,lazy & 10.1 & - & 4.21 & 20.7 & 3.11 & 2.24 & 41.2 & 5.15GB\\
& +chunk & 10.1 & 1.27 & 0.29 & 2.48 & 0.19 & 0.06 & 14.7 & 1.88GB\\
& +parfor & 11.1 & 1.16 & 0.29 & 2.68 & 0.11 & 0.04 & 15.6 & 1.87GB\\
\end{tabular}
    \caption{Time and memory consumption for the triangular mesh dataset.
      The meaning of the columns is the same as in Table~\ref{tbl:chunk_et_al}.}
\label{tbl:mesh_runtime}
\end{table*}

\begin{table*}[h]\centering
\begin{tabular}{c|l|cccccc|cc|c}
n & Variant & IO & Ch & MG & KB & RP & Min & Time & Mem & Size\\
\hline
\multirow{4}{*}{5000}
& rivet\_clone & 0.14 & - & 1.28 & 0.99 & 0.02 & 2.69 & 5.16 & 242MB & \multirow{4}{*}{(560.0, 282.4)}\\
& +queue,lazy & 0.14 & - & 0.09 & 0.06 & 0.03 & 0.01 & 0.35 & 55MB\\
& +chunk & 0.14 & 0.11 & 0.00 & 0.00 & 0.00 & 0.00 & 0.26 & 41MB\\
& +parfor & 0.15 & 0.05 & 0.00 & 0.00 & 0.00 & 0.00 & 0.21 & 39MB\\
\hline
\multirow{4}{*}{10000}
& rivet\_clone & 0.31 & - & 5.42 & 4.42 & 0.06 & 12.2 & 22.4 & 883MB & \multirow{4}{*}{(1178.0, 593.2)}\\
& +queue,lazy & 0.30 & - & 0.22 & 0.15 & 0.06 & 0.04 & 0.80 & 120MB\\
& +chunk & 0.30 & 0.33 & 0.01 & 0.00 & 0.00 & 0.00 & 0.67 & 80MB\\
& +parfor & 0.30 & 0.14 & 0.01 & 0.01 & 0.00 & 0.00 & 0.47 & 79MB\\
\hline
\multirow{4}{*}{20000}
& rivet\_clone & 0.63 & - & 27.9 & 19.0 & 0.15 & 57.9 & 105 & 3.35GB & \multirow{4}{*}{(2345.8, 1183.0)}\\
& +queue,lazy & 0.61 & - & 0.57 & 0.36 & 0.16 & 0.12 & 1.87 & 255MB\\
& +chunk & 0.62 & 1.04 & 0.04 & 0.02 & 0.00 & 0.00 & 1.75 & 158MB\\
& +parfor & 0.61 & 0.38 & 0.05 & 0.02 & 0.00 & 0.00 & 1.08 & 158MB\\
\hline
\multirow{4}{*}{40000}
& rivet\_clone & 1.54 & - & 150 & 93.7 & 0.36 & 415 & 661 & 13.0GB & \multirow{4}{*}{(4703.0, 2374.0)}\\
& +queue,lazy & 1.28 & - & 1.54 & 0.85 & 0.36 & 0.37 & 4.46 & 577MB\\
& +chunk & 1.28 & 3.55 & 0.13 & 0.06 & 0.02 & 0.00 & 5.08 & 317MB\\
& +parfor & 1.25 & 1.12 & 0.13 & 0.06 & 0.00 & 0.00 & 2.61 & 317MB\\
\end{tabular}
\caption{Random Delaunay triangulations with $d=1$, small instances.
The meaning of the columns is the same as in Table~\ref{tbl:chunk_et_al}.}
\label{tbl:random_del_1_small}
\end{table*}

\begin{table*}[h]\centering
\begin{tabular}{c|l|cccccc|cc|c}
n & Variant & IO & Ch & MG & KB & RP & Min & Time & Mem & Size\\
\hline
\multirow{3}{*}{80000}
& queue,lazy & 3.14 & - & 4.35 & 2.02 & 0.80 & 1.23 & 11.6 & 1.29GB & \multirow{3}{*}{(9438.0, 4762.2)}\\
& +chunk & 2.70 & 13.2 & 0.40 & 0.17 & 0.08 & 0.01 & 16.6 & 715MB\\
& +parfor & 2.60 & 3.43 & 0.43 & 0.16 & 0.02 & 0.00 & 6.72 & 733MB\\
\hline
\multirow{3}{*}{160000}
& queue,lazy & 6.58 & - & 12.2 & 4.69 & 1.95 & 4.77 & 30.4 & 2.80GB & \multirow{3}{*}{(19275.8, 9722.6)}\\
& +chunk & 5.67 & 53.3 & 1.32 & 0.43 & 0.30 & 0.04 & 61.2 & 1.78GB\\
& +parfor & 5.47 & 11.3 & 1.46 & 0.41 & 0.07 & 0.02 & 18.9 & 2.04GB\\
\hline
\multirow{3}{*}{320000}
& queue,lazy & 13.8 & - & 32.6 & 11.2 & 4.79 & 21.3 & 84.3 & 6.52GB & \multirow{3}{*}{(38659.6, 19505.0)}\\
& +chunk & 11.8 & 227 & 4.69 & 1.20 & 1.28 & 0.12 & 247 & 4.80GB\\
& +parfor & 11.5 & 41.6 & 5.05 & 1.04 & 0.23 & 0.06 & 59.7 & 6.17GB\\
\hline
\multirow{3}{*}{640000}
& queue,lazy & 29.4 & - & 85.0 & 26.0 & 11.2 & 94.6 & 247 & 14.7GB & \multirow{3}{*}{(77704.2, 39200.4)}\\
& +chunk & 24.8 & 1019 & 18.1 & 3.95 & 6.11 & 0.44 & 1073 & 14.8GB\\
& +parfor & 24.0 & 169 & 18.8 & 2.93 & 0.99 & 0.22 & 217 & 20.1GB\\
\end{tabular}
\caption{Random Delaunay triangulations with $d=1$, large instances.
The meaning of the columns is the same as in Table~\ref{tbl:chunk_et_al}.}
\label{tbl:random_del_1_large}
\end{table*}

\begin{table*}[h]\centering
\begin{tabular}{c|l|cccccc|cc|c}
n & Variant & IO & Ch & MG & KB & RP & Min & Time & Mem & Size\\
\hline
\multirow{4}{*}{5000}
& rivet\_clone & 0.14 & - & 0.89 & 1.90 & 0.03 & 2.83 & 5.81 & 314MB & \multirow{4}{*}{(0.0, 0.0)}\\
& +queue,lazy & 0.14 & - & 0.01 & 0.28 & 0.03 & 0.00 & 0.49 & 127MB\\
& +chunk & 0.14 & 0.01 & 0.00 & 0.05 & 0.00 & 0.00 & 0.21 & 42MB\\
& +parfor & 0.15 & 0.01 & 0.00 & 0.05 & 0.00 & 0.00 & 0.22 & 41MB\\
\hline
\multirow{4}{*}{10000}
& rivet\_clone & 0.30 & - & 3.88 & 8.42 & 0.06 & 12.2 & 24.9 & 1.07GB & \multirow{4}{*}{(0.0, 0.0)}\\
& +queue,lazy & 0.30 & - & 0.04 & 0.79 & 0.07 & 0.02 & 1.25 & 306MB\\
& +chunk & 0.29 & 0.02 & 0.00 & 0.14 & 0.00 & 0.00 & 0.46 & 82MB\\
& +parfor & 0.29 & 0.03 & 0.00 & 0.14 & 0.00 & 0.00 & 0.48 & 85MB\\
\hline
\multirow{4}{*}{20000}
& rivet\_clone & 0.62 & - & 19.3 & 49.8 & 0.16 & 57.3 & 127 & 3.89GB & \multirow{4}{*}{(0.0, 0.0)}\\
& +queue,lazy & 0.62 & - & 0.10 & 2.43 & 0.16 & 0.07 & 3.44 & 793MB\\
& +chunk & 0.62 & 0.07 & 0.00 & 0.37 & 0.00 & 0.00 & 1.08 & 184MB\\
& +parfor & 0.61 & 0.07 & 0.00 & 0.40 & 0.00 & 0.00 & 1.10 & 194MB\\
\hline
\multirow{4}{*}{40000}
& rivet\_clone & 1.54 & - & 109 & 260 & 0.40 & 391 & 764 & 14.5GB & \multirow{4}{*}{(0.0, 0.0)}\\
& +queue,lazy & 1.24 & - & 0.24 & 7.64 & 0.41 & 0.18 & 9.81 & 2.13GB\\
& +chunk & 1.25 & 0.15 & 0.00 & 1.11 & 0.00 & 0.00 & 2.56 & 453MB\\
& +parfor & 1.24 & 0.14 & 0.00 & 1.17 & 0.00 & 0.00 & 2.61 & 475MB\\
\end{tabular}
\caption{Random Delaunay triangulations with $d=2$, small instances.
The meaning of the columns is the same as in Table~\ref{tbl:chunk_et_al}.}
\label{tbl:random_del_2_small}
\end{table*}

\begin{table*}[h]\centering
\begin{tabular}{c|l|cccccc|cc|c}
n & Variant & IO & Ch & MG & KB & RP & Min & Time & Mem & Size\\
\hline
\multirow{3}{*}{80000}
& queue,lazy & 3.09 & - & 0.55 & 23.8 & 1.15 & 0.45 & 29.3 & 5.60GB & \multirow{3}{*}{(0.0, 0.0)}\\
& +chunk & 2.63 & 0.33 & 0.01 & 3.34 & 0.02 & 0.00 & 6.40 & 1.12GB\\
& +parfor & 2.53 & 0.31 & 0.01 & 3.48 & 0.01 & 0.00 & 6.42 & 1.17GB\\
\hline
\multirow{3}{*}{160000}
& queue,lazy & 6.40 & - & 1.24 & 79.2 & 3.52 & 1.45 & 92.4 & 14.8GB & \multirow{3}{*}{(0.0, 0.0)}\\
& +chunk & 5.45 & 0.72 & 0.03 & 10.3 & 0.05 & 0.00 & 16.7 & 2.73GB\\
& +parfor & 5.26 & 0.67 & 0.03 & 10.5 & 0.03 & 0.00 & 16.6 & 2.83GB\\
\hline
\multirow{3}{*}{320000}
& queue,lazy & 13.2 & - & 2.77 & 285 & 13.9 & 4.68 & 321 & 41.5GB & \multirow{3}{*}{(0.0, 0.0)}\\
& +chunk & 11.3 & 1.57 & 0.08 & 35.9 & 0.13 & 0.01 & 49.4 & 7.41GB\\
& +parfor & 10.9 & 1.44 & 0.07 & 36.3 & 0.07 & 0.01 & 49.1 & 7.60GB\\
\hline
\multirow{3}{*}{640000}
& queue,lazy & 28.0 & - & 6.28 & 1670 & 215 & 17.5 & 1941 & 64.9GB & \multirow{3}{*}{(0.0, 0.0)}\\
& +chunk & 27.6 & 3.37 & 0.17 & 112 & 0.30 & 0.03 & 144 & 19.6GB\\
& +parfor & 22.5 & 3.03 & 0.15 & 113 & 0.16 & 0.02 & 140 & 19.9GB\\
\end{tabular}
\caption{Random Delaunay triangulations with $d=2$, large instances.
The meaning of the columns is the same as in Table~\ref{tbl:chunk_et_al}.}
\label{tbl:random_del_2_large}
\end{table*}

\begin{table*}[h]\centering
\begin{tabular}{c|l|cccccc|cc|c}
N & Variant & IO & Ch & MG & KB & RP & Min & Time & Mem & Size\\
\hline
\multirow{4}{*}{100K}
& rivet\_clone & 0.12 & - & 0.62 & 0.54 & 0.04 & 4.96 & 6.30 & 84MB & \multirow{5}{*}{(6066.2, 3068.6)}\\
& +queue,lazy & 0.13 & - & 0.05 & 0.08 & 0.04 & 0.02 & 0.34 & 86MB\\
& +chunk & 0.12 & 0.02 & 0.01 & 0.01 & 0.01 & 0.00 & 0.21 & 39MB\\
& +parfor & 0.13 & 0.02 & 0.01 & 0.02 & 0.00 & 0.00 & 0.20 & 38MB\\
\hline
\multirow{4}{*}{199K}
& rivet\_clone & 0.25 & - & 2.61 & 2.27 & 0.11 & 21.7 & 27.0 & 207MB & \multirow{5}{*}{(13029.8, 6588.8)}\\
& +queue,lazy & 0.26 & - & 0.10 & 0.21 & 0.11 & 0.06 & 0.77 & 210MB\\
& +chunk & 0.25 & 0.05 & 0.04 & 0.05 & 0.03 & 0.01 & 0.46 & 75MB\\
& +parfor & 0.25 & 0.04 & 0.04 & 0.05 & 0.01 & 0.01 & 0.44 & 87MB\\
\hline
\multirow{4}{*}{400K}
& rivet\_clone & 0.53 & - & 14.7 & 9.57 & 0.26 & 111 & 136 & 536MB & \multirow{5}{*}{(27425.0, 13858.4)}\\
& +queue,lazy & 0.57 & - & 0.24 & 0.57 & 0.26 & 0.20 & 1.90 & 545MB\\
& +chunk & 0.53 & 0.13 & 0.09 & 0.14 & 0.08 & 0.05 & 1.06 & 174MB\\
& +parfor & 0.53 & 0.10 & 0.10 & 0.15 & 0.04 & 0.02 & 0.98 & 214MB\\
\hline
\multirow{4}{*}{801K}
& rivet\_clone & 1.13 & - & 91.8 & 50.9 & 0.65 & 697 & 842 & 1.31GB & \multirow{5}{*}{(57810.4, 29202.0)}\\
& +queue,lazy & 1.20 & - & 0.54 & 1.46 & 0.65 & 0.57 & 4.52 & 1.32GB\\
& +chunk & 1.12 & 0.29 & 0.21 & 0.40 & 0.22 & 0.15 & 2.45 & 420MB\\
& +parfor & 1.11 & 0.21 & 0.22 & 0.42 & 0.12 & 0.08 & 2.19 & 528MB\\
\end{tabular}
\caption{Multi-cover data, small instances.
$N:=\ell+n$ is the total number of columns in $A$ and $B$.
      The meaning of the other columns is the same as in Table~\ref{tbl:chunk_et_al}.}
\label{tbl:multicover_small}
\end{table*}

\begin{table*}[h]\centering
\begin{tabular}{c|l|cccccc|cc|c}
N & Variant & IO & Ch & MG & KB & RP & Min & Time & Mem & Size\\
\hline
\multirow{3}{*}{1.6M}
& queue,lazy & 3.06 & - & 1.23 & 4.18 & 1.69 & 1.90 & 12.2 & 3.67GB & \multirow{4}{*}{(119379.6, 60273.4)}\\
& +chunk & 2.38 & 0.60 & 0.49 & 1.21 & 0.61 & 0.50 & 5.86 & 1.15GB\\
& +parfor & 2.32 & 0.40 & 0.51 & 1.25 & 0.32 & 0.21 & 5.1 & 1.49GB\\
\hline
\multirow{3}{*}{3.17M}
& queue,lazy & 7.26 & - & 2.81 & 11.6 & 4.61 & 6.20 & 32.9 & 9.50GB & \multirow{4}{*}{(244096.4, 123224.4)}\\
& +chunk & 5.09 & 1.28 & 1.14 & 3.47 & 1.65 & 1.55 & 14.3 & 3.04GB\\
& +parfor & 4.98 & 0.86 & 1.19 & 3.50 & 0.74 & 0.54 & 11.9 & 4.07GB\\
\hline
\multirow{3}{*}{6.49M}
& queue,lazy & 20.2 & - & 6.44 & 32.8 & 14.5 & 22.3 & 97.3 & 27.1GB & \multirow{4}{*}{(509904.2, 257387.4)}\\
& +chunk & 11.4 & 2.80 & 2.69 & 10.1 & 4.72 & 5.10 & 37.2 & 8.78GB\\
& +parfor & 11.0 & 1.83 & 2.75 & 10.2 & 1.82 & 1.38 & 29.4 & 12.3GB\\
\end{tabular}
\caption{Multi-cover data, large instances.
$N:=\ell+n$ is the total number of columns in $A$ and $B$.
      The meaning of the other columns is the same as in Table~\ref{tbl:chunk_et_al}.}
\label{tbl:multicover_large}
\end{table*}

\begin{table*}[h]\centering
\begin{tabular}{c|l|cccccccc}
name, N & Variant & Vector & Heap & List & Set & A-Full & A-Set & A-Heap & A-Bit-Tree\\
\hline
multi-cover & lazy, $\ldots$, chunk & \textbf{6.56} & 24.5 & 17.8 & 23.7 & 12.0 & 21.0 & 22.3 & 7.43\\
1.59M & +parfor & \textbf{5.42} & 18.5 & 16.2 & 19.2 & 9.04 & 15.8 & 15.8 & 5.81\\
\hline
multi-cover & lazy, $\ldots$, chunk & \textbf{37.8} & 189 & 166 & 180 & 82.0 & 185 & 169 & 45.9\\
6.49M & +parfor & \textbf{28.6} & 138 & 118 & 141 & 60.9 & 135 & 118 & 35.6\\
\hline
func. Rips & lazy, $\ldots$, chunk & \textbf{6.73} & 22.5 & 29.4 & 16.9 & 11.0 & 16.1 & 24.0 & 8.33\\
1.02M & +parfor & 4.64 & 11.4 & 21.5 & 9.71 & 5.88 & 7.86 & 10.5 & \textbf{4.03}\\
\hline
func. Rips & lazy, $\ldots$, chunk & \textbf{108} & 435 & 698 & 288 & 180 & 277 & 459 & 125\\
8.17M & +parfor & 65.7 & 211 & 539 & 162 & 89.0 & 131 & 198 & \textbf{50.6}\\
\hline
hand & lazy, $\ldots$, chunk & \textbf{0.52} & 0.53 & 0.73 & 0.56 & 0.54 & 0.59 & 0.54 & 0.53\\
172K & +parfor & \textbf{0.49} & 0.53 & 0.78 & 0.57 & 0.5 & 0.52 & 0.52 & \textbf{0.49}\\
\hline
dragon & lazy, $\ldots$, chunk & \textbf{9.04} & 11.4 & 40.4 & 11.4 & 10.2 & 11.5 & 11.1 & 9.51\\
3.28M & +parfor & \textbf{8.73} & 10.5 & 53.7 & 10.9 & 9.19 & 10.5 & 9.91 & 8.85\\
\hline
random Del. ($d=1$) & lazy, $\ldots$, chunk & \textbf{16.9} & 50.8 & 104 & 34.5 & 22.6 & 32.5 & 51.0 & 17.1\\
1.68M & +parfor & 6.73 & 14.0 & 29.5 & 12.3 & 8.25 & 11.6 & 13.1 & \textbf{6.37}\\
\hline
random Del. ($d=1$) & lazy, $\ldots$, chunk & 248 & 697 & 2381 & 451 & 295 & 431 & 687 & \textbf{213}\\
6.75M & +parfor & 59.5 & 165 & 630 & 138 & 79.6 & 150 & 152 & \textbf{53.8}\\
\hline
random Del. ($d=2$) & lazy, $\ldots$, chunk & 6.68 & 23.9 & 48.7 & 23.8 & 9.67 & 24.4 & 19.8 & \textbf{6.35}\\
1.60M & +parfor & 6.28 & 23.6 & 72.2 & 24.4 & 9.26 & 23.5 & 19.2 & \textbf{6.14}\\
\hline
random Del. ($d=2$) & lazy, $\ldots$, chunk & 52.1 & 254 & 1196 & 262 & 70.1 & 312 & 196 & \textbf{38.9}\\
6.43M & +parfor & 49.9 & 251 & 1620 & 266 & 68.1 & 274 & 193 & \textbf{38.0}\\
\hline
convex hull & lazy, $\ldots$, chunk & \textbf{7.6} & 9.31 & 17.9 & 9.48 & 8.33 & 9.33 & 9.17 & 7.74\\
2.00M & +parfor & \textbf{7.18} & 8.78 & 17.2 & 9.46 & 7.76 & 8.44 & 8.38 & 7.59\\
\hline
convex hull & lazy, $\ldots$, chunk & \textbf{32.3} & 42.3 & 158 & 42.5 & 36.2 & 45.3 & 41.4 & 32.5\\
8.00M & +parfor & \textbf{29.6} & 39.0 & 119 & 40.8 & 33.8 & 38.9 & 39.0 & 30.7\\
\end{tabular}
    \caption{Running times for the different column types in \textsc{Phat}. 
      From left to right: $N:=\ell+n$ is the total number of columns in $A$ and $B$,
      \texttt{vector\_vector}, \texttt{vector\_heap}, \texttt{vector\_list},
      \texttt{vector\_set}, \texttt{full\_pivot\_column},  \texttt{sparse\_pivot\_column},
      \texttt{heap\_pivot\_column}, \texttt{bit\_tree\_pivot\_column}.}
    \label{tbl:column_compare}
\end{table*}

\clearpage

\section{Clearing}
\label{app:clearing}

The clearing optimization (also hinted at in~\cite{lw-computing})
is based on the following idea:
When the algorithm \texttt{Min\_gens} returns a column, by definition, the column
encodes an element of the kernel of $B$. We would like to
simply use this element as a basis vector instead of computing a different
one through additional column operations in \texttt{Ker\_basis}.
Indeed, if the column has grade $p$, then it represents
a kernel element at grade $p$.
The problem, however, is that this kernel element might be present at grades
smaller than $p$ as well, and it seems difficult
to determine the ``birth grade'' of this kernel element in general,
and which kernel element in \texttt{ker\_basis} can be replaced
with that column.

There are cases, however, where this question can be resolved:
call a non-zero column \emph{pivot-dominated}, 
if each non-zero row entry of the column is at a grade that is smaller than the grade of the pivot.
In particular, local columns are pivot-dominated. 
We then proceed as follows: At the beginning
of \texttt{Ker\_basis}, iterate over the columns
returned by \texttt{Min\_gens}. If a column is pivot-dominated
with pivot $i$, we add this column to the kernel immediately, with the same
grade as the row grade of $i$ in $A$, and we set column $i$ in $B$ to zero
(it is possible that the same column $i$ is set by
several columns of \texttt{Min\_gens}~-- this does not invalidate the algorithm,
since either of them yields a valid outcome).

\begin{lemma}
The modified algorithm yields a valid kernel basis.
\end{lemma}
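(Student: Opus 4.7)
The plan is to view the clearing operation as a graded change of basis on the domain of $B$, after which the standard \texttt{Ker\_basis} algorithm only needs to find the remaining kernel basis elements. Let $c$ be a pivot-dominated column returned by \texttt{Min\_gens} with pivot index $i$, and let $g_i$ denote the row grade of $i$ in $A$, equivalently the column grade of $i$ in $B$. The first step is to verify that $c$ represents a kernel element of $B$ at grade $g_i$: since every column of $A$ lies in the image of $A$ and $BA=0$, we have $Bc=0$; and pivot-dominance ensures that every non-zero row entry of $c$ has row grade $\leq g_i$, so $c \in \ker(B_{\leq g_i})$, not merely at the column grade of the original $A$-column.

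Next, I would interpret ``setting column $i$ of $B$ to zero'' as a graded change of basis on the domain. Replace the basis vector $e_i$ of grade $g_i$ by $c = e_i + \sum_{j<i}\alpha_j e_j$; since each contributing $e_j$ has grade $\leq g_i$, this is a valid graded basis change, and the column of $B$ expressed in the new basis is $Bc=0$. Denote the resulting matrix by $\tilde{B}$. The underlying linear map is unchanged; only its matrix form changes.

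The third step is to combine the cleared columns with the output of the standard \texttt{Ker\_basis} applied to $\tilde{B}$. Since zero columns have no pivot, they are never used in any reduction, so every auxiliary vector output by \texttt{Ker\_basis} has support only in non-cleared indices. Because $\tilde{B}$ and $B$ agree on non-cleared columns, each such auxiliary vector also annihilates the original $B$. The cleared columns have pivots at cleared indices while the auxiliary-vector outputs have pivots at non-cleared indices, hence the union forms a linearly independent set of graded kernel elements of $B$.

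The main obstacle I expect is verifying completeness: at every grade $p$, the combined collection of elements at grade $\leq p$ must span $\ker(B_{\leq p})$. I plan to handle this by a dimension argument comparing with the standard \texttt{Ker\_basis} run on the unmodified $B$: the standard algorithm is known to produce a graded basis, and one can match the cleared elements to a subset of its graded generators (those tied to the pivot indices of the cleared columns, now ``born'' at the earlier grade $g_i$), while the auxiliary vectors produced by \texttt{Ker\_basis} on $\tilde{B}$ account for the remaining generators. A careful induction over the cleared columns should then formalize this dimension match at every grade.
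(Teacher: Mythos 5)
Your opening step is correct and matches the paper: since $Bc=0$ and $c$ is pivot-dominated, $c$ lies in $\ker B_{\leq g_i}$, i.e.\ it is a genuine kernel element already at the grade of its pivot. The change-of-basis framing is also legitimate ($\tilde{B}=BS$ with $S$ a graded unipotent matrix, so $\ker\tilde{B}_{\leq p}=S_{\leq p}^{-1}\ker B_{\leq p}$ at every grade). However, the argument stalls exactly where you say it does, and the proposed completion does not yet close the gap. First, your reduction leaves open the claim that \texttt{Ker\_basis} run on $\tilde{B}$, together with the manually inserted elements for the zeroed columns, yields a \emph{graded basis} of $\ker\tilde{B}$: as specified, \texttt{Ker\_basis} only emits an output when a column \emph{turns} from non-zero to zero, so it is silent on columns that are zero from the start, and its correctness is only established for the original input, not for the modified matrix. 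Second, the dimension-matching induction you sketch hinges on a fact you never establish: that the \emph{unmodified} algorithm would produce a kernel generator with pivot $i$ born at exactly the grade $g_i$. If that generator were born at a strictly later grade, your combined collection would over-count at intermediate grades and fail to be a graded basis; your phrasing (``now born at the earlier grade $g_i$'') suggests you have not ruled this out.

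The missing fact is precisely what the paper's proof supplies, and it makes the whole detour unnecessary. Because $c$ is pivot-dominated, it witnesses a linear dependency among columns of $B$ all of grade $\leq g_i$ in which $i$ is the largest index; hence the unmodified \texttt{Ker\_basis} reduces column $i$ to zero when it first visits it at grade $g_i$ and outputs an element $c'$ with pivot $i$ at that same grade. Then $c$ and $c'$ differ only by a combination of basis elements of smaller index (and grade $\leq g_i$), so exchanging $c'$ for $c$ preserves the graded-basis property at every grade; iterating over all cleared columns finishes the proof. I recommend you either prove this birth-grade claim and use it to drive your dimension count, or replace the change-of-basis route by this direct exchange argument.
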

\begin{proof}
Consider a pivot-dominated column $c$ returned by \texttt{Min\_gens}, with
pivot index $i$ at grade $p$. First, we argue that the unmodified
\texttt{ker\_basis} procedure will return a kernel element when visiting column $i$
at grade $p$: indeed, this happens if and only if there is a linear combination in $B$
that involves column $i$ and other columns with grade $\leq p$. 
But since $c$ is pivot-dominated,
it encodes precisely such a linear combination.
So, letting $c'$ denote the column produced by the unmodified algorithm at column $i$,
we have that $c$ and $c'$ differ by a linear combination of elements with index $<i$.
Hence, exchanging $c'$ with $c$ yields a basis for the kernel of $B_{\leq p}$,
and for all grades $\neq p$, the basis is unaffected. The statement follows
by applying this argument iteratively.
\end{proof}

The lemma implies that the resulting minimal presentation is still isomorphic
to the original one, proving the correctness of the algorithm. 

\paragraph{Practical evaluation.}
For most cases, the clearing optimization shows no practical benefit. 
There are several good reasons for this:

\begin{itemize}
\item The modification can only improve the performance of \texttt{ker\_basis}
(and potentially \texttt{reparam}) and hence is useless when 
\texttt{chunk} and/or \texttt{min\_gens}
are the computational bottleneck, as for instance the case in the function-Rips
datasets (Table~\ref{tbl:function_rips_runtime}).
\item When \texttt{Min\_gens} returns much fewer columns than \texttt{ker\_basis},
clearing will only save operations for a small fraction of columns. This happens
in particular when $A$ is much smaller than $B$.
\item To take any effect, there must be many pivot-dominated columns. 
However, using chunk preprocessing, there will not be any local column
returned by \texttt{Min\_gens} which is the most common example
of pivot-domination.
\end{itemize}

We observed that the fraction of pivot-dominated columns varies a lot between the examples.
For instance, in our convex hull examples, as well as for the meshes,
the number of such columns was very small (less than 5\% of all columns from \texttt{Min\_gens}).
For random Delaunay complexes and homology dimension $2$, there was not a single pivot-dominated
columns throughout, whereas for dimension $1$, more than $90\%$ of the columns
were pivot-dominated. Still, clearing has no significant effect in these examples either, because
the majority of running time is spent on chunk preprocessing.

\begin{table}[h]\centering
\begin{tabular}{c|c|c|c|c}

N & KB & KB$^\ast$ & $\Delta$ & MG\\
\hline
800K & 0.42 & 0.34 & 0.08 & 0.24\\
1.6M & 1.25 & 1.00 & 0.25 & 0.56\\
3.2M & 3.50 & 2.76 & 0.74 & 1.29\\
6.5M & 10.2 & 7.89 & 2.31 & 3.03
\end{tabular}
    \caption{The effect of the clearing optimization on the multi cover dataset.
From left to right: $N:=\ell+n$ is the total
number of columns in $A$ and $B$, time for \texttt{Ker\_basis} without clearing,
time for \texttt{Ker\_basis} with clearing, absolute difference of running times, 
time for \texttt{Min\_gens}.}
\label{tbl:multi_cover_clearing}
\end{table}

The only type of example where we could observe an effect on practical performance was
the multi-cover dataset: here, we observed that consistently
about half of the columns returned by \texttt{Min\_gens}
are pivot-dominated.
In Table~\ref{tbl:multi_cover_clearing}, we see that clearing
indeed yields speed-ups to \texttt{ker\_basis}, due to a decrease in column additions.

Note that the clearing improvement requires that \texttt{Min\_gens} terminates before \texttt{ker\_basis}
starts. Therefore, it cannot be combined with the previously mentioned option 
of running \texttt{Min\_gens} and \texttt{ker\_basis} in parallel.
In this example, this parallel option leads to slightly better results,
as the last two columns in Table~\ref{tbl:multi_cover_clearing} show.

To summarize, many things have to come together to make clearing useful in practice;
we do not rule out the possibility that instances exists where it leads
to significant speedups, but such instances have to be identified in future work.

\section{Pseudocode}
\label{app:pseudocode}

In this section, we give pseudocode for all the algorithms discussed in the paper. Algorithms~\ref{alg:reduce_lw}--\ref{alg:lw_algo}
describe the LW algorithm, and Algorithms~\ref{alg:reduce_new}--\ref{alg:algorithm_new} our improved algorithm. More precisely,
we describe the version here that uses chunk preprocessing, queues, and lazy minimization. We also marked the for loops in the pseudo-code
that can be run in parallel.

\begin{algorithm*}
\caption{Matrix reduction, LW-version}
\label{alg:reduce_lw}
\begin{algorithmic}
\Function{reduce\_LW}{$A,i,use\_auxiliary=false$}
    \LeftComment{$A$ maintains a pivot vector $piv$}
    \LeftComment{$piv[i]=k$ means that column $k$ in $A$ has pivot $i$.}
    \LeftComment{$piv[i]=-1$ means that no (visited) column in $A$ has pivot $i$.}
    \LeftComment{If \textit{use\_auxiliary} is set, $A$ also maintains an auxiliary matrix.}
    \While{column $i$ in $A$ is not empty}
    \State {$j\gets$ pivot of column $i$}
    \State {$k\gets piv[j]$}
    \If {$k=-1$ or $k>j$}
    \State $piv[j]\gets i$
    \State {\textbf{break}}
    \Else
    \State {add column $k$ to column $i$}
    \If {\textit{use\_auxiliary}}
    \State {add auxiliary-column $k$ to auxiliary-column $i$}
    \EndIf
    \EndIf
    \EndWhile
\EndFunction
\end{algorithmic}
\end{algorithm*}

\begin{algorithm*}
\caption{Min\_gens, LW-version}
\label{alg:min_gens_lw}
\begin{algorithmic}
\Function{Min\_gens\_LW}{$A$}
    \State {$(X,Y) \gets grid\_dim\_of(A)$}\Comment{grades of $A$ are on $X\times Y$ grid}
    \State {$Out\gets\emptyset$}
    \For {$x=1,\ldots,X$}
    \For {$y=1,\ldots,Y$}
    \State {$L\gets$column indices of $A$ with grades $(0,y),\ldots(x-1,y)$ in order}
    \For {$i\in L$}
    \State {\Call{Reduce\_LW}{A,i}}
    \EndFor
    \State {$I\gets$ column indices of $A$ with grade $(x,y)$}
    \For {$i\in I$}
    \State {\Call{Reduce\_LW}{A,i}}
    \If {column $i$ is not zero}
    \State {Append column $i$ to $Out$ with grade $(x,y)$}
    \EndIf
    \EndFor
    \EndFor
    \EndFor
    \State{\Return{Out}}
\EndFunction
\end{algorithmic}
\end{algorithm*}

\begin{algorithm*}
\caption{Ker\_basis, LW-version}
\label{alg:ker_basis_lw}
\begin{algorithmic}
\Function{Ker\_basis\_LW}{$B$}
    \State {$(X,Y) \gets grid\_dim\_of(B)$}\Comment{grades of $B$ are on $X\times Y$ grid}
    \State {$Out\gets\emptyset$}
    \For {$x=1,\ldots,X$}
    \For {$y=1,\ldots,Y$}
    \State {$L\gets$column indices of $A$ with grades $(0,y),\ldots(x,y)$ in order}
    \For {$i\in L$}
    \State {\Call{Reduce\_LW}{A,i,use\_auxiliary=true}}
    \If {column $i$ has turned from non-zero to zero}
    \State {Append auxiliary-column of $i$ to $Out$ with grade $(x,y)$}
    \EndIf
    \EndFor
    \EndFor
    \EndFor
    \State{\Return {$Out$}}
\EndFunction
\end{algorithmic}
\end{algorithm*}

\begin{algorithm*}
\caption{Reparameterize}
\label{alg:reparam}
\begin{algorithmic}
\Function{Reparam}{$G,K$}
    \LeftComment{$G$ is the output of $\Call{Min\_gens}{A}$}
    \LeftComment{$K$ is the output of $\Call{Ker\_basis}{B}$}
    \State {Form matrix $(K|G)$ by concatenation.}
    \State {$Out\gets\emptyset$}
    \For {$i$ in index range of $G$-column in $(K|G)$} \Comment{Parallelizable}
    \State {\Call{Reduce\_LW}{$(K|G),i,use\_auxiliary=true$}}\Comment{Column $i$ is zero afterwards}
    \State {Append the auxiliary column $i$ to $Out$}
    \EndFor
    \State{\Return {$Out$}}
\EndFunction
\end{algorithmic}
\end{algorithm*}

\begin{algorithm*}
\caption{Locality test}
\begin{algorithmic}
\label{alg:locality}
\Function{is\_local}{$A,i$}
    \If {column $i$ in $A$ is $0$}
      \State{\Return {$true$}}
    \EndIf
    \State {$j\gets$ pivot of column $i$ in $A$}
    \State {Return true iff the column grade of $i$ in $A$ equals the row grade of $j$ in $A$}
\EndFunction
\end{algorithmic}
\end{algorithm*}

\begin{algorithm*}
\caption{Minimization, LW-version}
\label{alg:minimize_lw}
\begin{algorithmic}
\Function{Minimize\_LW}{$M'$}
\State {$n\gets\text{\# columns in $M'$}$} 
\For {$i=1,\ldots,n$} 
\If{\Call{is\_local}{$M',i$}}
\State {$j\gets$ pivot of column $i$ in $M'$}
\For{$k=i+1,\ldots,n$}
\If{column $k$ in $M'$ contains $j$ as row index}
\State{Add column $i$ to column $k$}
\EndIf
\EndFor
\State {Mark column $i$ and row $j$ in $M'$}
\EndIf
\EndFor
\State {$M\gets$ submatrix of $M'$ consisting of unmarked rows and columns}
\State {Re-index the columns of $M$ and \Return{$M$}}
\EndFunction
\end{algorithmic}
\end{algorithm*}

\begin{algorithm*}
\caption{The LW-algorithm}
\label{alg:lw_algo}
\begin{algorithmic}
\Function{Min\_pres\_LW}{$(A,B)$}
\State {$G\gets$\Call{Min\_gens\_LW}{$A$}}
\State {$K\gets$\Call{Ker\_basis\_LW}{$B$}}
\State {$M'\gets$\Call{Reparam}{$G,K$}}
\State {$M\gets$\Call{Minimize\_LW}{$M'$}}
\State {\Return $M$}
\EndFunction
\end{algorithmic}
\end{algorithm*}


\begin{algorithm*}
\caption{Matrix reduction, new version}
\label{alg:reduce_new}
\begin{algorithmic}
\Function{Reduce\_New}{$A,i,grade,use\_auxiliary=false,local\_check=false$}
    \LeftComment{$grade$ is the grade that the algorithm is currently considering}
    \LeftComment{On top of pivot vector and auxiliary vector, $A$ maintains priority queues:}
    \LeftComment{$cols\_at\_y\_grade[y_0]$ is a priority queue for column indices at $y$-grade $y_0$}
    \LeftComment{$grade\_queue$ is a priority queue of grades that the algorithm needs to look at}
    \While{column $i$ in $A$ is not empty}
    \State {$j\gets$ pivot of column $i$}
    \State {$k\gets piv[j]$}
    \If {$k=-1$}
    \State $piv[j]\gets i$
    \State {\textbf{break}}
    \EndIf
    \If {$k>j$}
    \State {$y\gets$ $y$-grade of column $k$ in $A$}
    \State {Push $k$ to $cols\_at\_y\_grade[y]$}
    \State {$x\gets$ $x$-grade of $grade$}
    \State {Push $(x,y)$ to $grade\_queue$}
    \State $piv[j]\gets i$
    \State {\textbf{break}}
    \EndIf
    \If {\texttt{local\_check} and not \Call{is\_local}{A,i}}
    \State {\textbf{break}}
    \EndIf
    \State {add column $k$ to column $i$}
    \If {\textit{use\_auxiliary}}
    \State {add auxiliary-column $k$ to auxiliary-column $i$}
    \EndIf
    \EndWhile
\EndFunction
\end{algorithmic}
\end{algorithm*}

\begin{algorithm*}
\caption{Min\_gens, new version}
\label{alg:min_gens_new}
\begin{algorithmic}
\Function{Min\_gens\_New}{$A$}
    \State {Initialize $grade\_queue$ as empty priority queue}
    \For {each column $c$ of $A$}
    \State{Push the grade of $c$ into $grade\_queue$}
    \EndFor
    \For {each $y$-grade $y_0$ that appears in a column of $A$}
    \State {Initialize $cols\_at\_y\_grade[y_0]$ as empty priority queue}
    \EndFor
    \State {$Out\gets\emptyset$}
    \While{$grade\_queue$ is not empty}
    \State{Pop the grade $(x,y)$ from $grade\_queue$ minimal in lex order}
    \State{Push all column indices at grade $(x,y)$ into $cols\_at\_y\_grade[y]$}
    \While{$cols\_at\_y\_grade[y]$ is not empty}
    \State{Pop column index $i$ from $cols\_at\_y\_grade[y]$ minimal in lex order}
    \State{\Call{Reduce\_new}{A,i,(x,y)}}\Comment{Might change the priority queues as side effect}
    \If{column $i$ is not $0$ and its grade is $(x,y)$}
    \State {Append column $i$ to $Out$ with grade $(x,y)$}
    \EndIf
    \EndWhile
    \EndWhile
    \State{\Return{Out}}
\EndFunction
\end{algorithmic}
\end{algorithm*}

\begin{algorithm*}
\caption{Ker\_basis, new version}
\label{alg:ker_basis_new}
\begin{algorithmic}
\Function{Ker\_basis\_new}{$B$}
    \State {Initialize $grade\_queue$ as empty priority queue}
    \For {each column $c$ of $B$}
    \State{Push the grade of $c$ into $grade\_queue$}
    \EndFor
    \For {each $y$-grade $y_0$ that appears in a column of $B$}
    \State {Initialize $cols\_at\_y\_grade[y_0]$ as empty priority queue}
    \EndFor
    \State {$Out\gets\emptyset$}
    \While{$grade\_queue$ is not empty}
    \State{Pop the grade $(x,y)$ from $grade\_queue$ minimal in lex order}
    \State{Push all column indices at grade $(x,y)$ into $cols\_at\_y\_grade[y]$}
    \While{$cols\_at\_y\_grade[y]$ is not empty}
    \State{Pop column index $i$ from $cols\_at\_y\_grade[y]$ minimal in lex order}
    \State{\Call{Reduce\_new}{A,i,(x,y),use\_auxiliary=true}}\Comment{Might change the priority queues as side effect}
    \If{column $i$ has turned from non-zero to zero}
    \State {Append auxiliary-column $i$ of $A$ to $Out$ with grade $(x,y)$}
    \EndIf
    \EndWhile
    \EndWhile
    \State{\Return{Out}}
\EndFunction
\end{algorithmic}
\end{algorithm*}

\begin{algorithm*}
\caption{Minimization, new version}
\label{alg:minimize_new}
\begin{algorithmic}
\Function{Minimize\_new}{$M'$}
\State {$n\gets\text{\# columns in $M'$}$} 
\For {$i=1,\ldots,n$} 
\State {$(x,y)\gets$ grade of column $i$}
\State {\Call{Reduce\_new}{M',i,(x,y),local\_check=true}}
\If {\Call{is\_local}{M',i}}
\State {$j \gets$ pivot of column $i$}
\State {Mark row $j$ and column $i$}
\EndIf
\EndFor
\For {each unmarked column $c$}\Comment{Parallelizable}
\State {$new\_col\gets$ empty list}
\While {$c$ is not $0$}
\State {Get maximal index $j$ from $c$}
\If {$j$ is marked} 
\State {$k\gets piv[j]$}
\State {Add column $k$ to $c$}
\Else
\State {Remove index $j$ from $c$ and append it to $new\_col$}
\EndIf
\EndWhile
\State{Set column $i$ of $M'$ to $new\_col$}
\EndFor
\State {$M\gets$ submatrix of $M'$ consisting of unmarked rows and columns}
\State {Re-index the columns of $M$ and \Return{$M$}}
\EndFunction
\end{algorithmic}
\end{algorithm*}

\begin{algorithm*}
\caption{Chunk preprocessing}
\label{alg:chunk}
\begin{algorithmic}
\Function{chunk}{$(A,B)$}
\State {$n\gets\text{\# columns in $A$}$} 
\For {$i=1,\ldots,n$} 
\State {$(x,y)\gets$ grade of column $i$ in $A$}
\State {\Call{Reduce\_new}{A,i,(x,y),local\_check=true}}
\If {\Call{is\_local}{A,i}}
\State {$j \gets$ pivot of column $i$ in $A$}
\State {Mark row $j$ and column $i$ of $A$ and mark column $j$ of $B$}
\EndIf
\EndFor
\For {each unmarked column $c$ in $A$}\Comment{Parallelizable}
\State {$new\_col\gets$ empty list}
\While {$c$ is not $0$}
\State {Get maximal index $j$ from $c$ and remove it from $c$}
\If {$j$ is marked} 
\State {$k\gets piv[j]$}
\State {Add column $k$ to $c$}
\Else
\State {Append index $j$ to $new\_col$}
\EndIf
\EndWhile
\State{Set column $i$ of $A$ to $new\_col$}
\EndFor
\State {$(A',B')\gets$ submatrices of $(A,B)$ consisting of unmarked rows and columns}
\State {Re-index the columns of $A'$ and $B'$ and \Return{$(A',B')$}}
\EndFunction
\end{algorithmic}
\end{algorithm*}

\begin{algorithm*}
\caption{The fast minimal presentation algorithm}
\label{alg:algorithm_new}
\begin{algorithmic}
\Function{Min\_pres\_new}{$(A,B)$}
\State {$(A',B')\gets$\Call{chunk}{$A$,$B$}}
\State {$G\gets$\Call{Min\_gens\_new}{$A'$}}
\State {$K\gets$\Call{Ker\_basis\_new}{$B'$}}
\State {$M'\gets$\Call{Reparam}{$G,K$}}
\State {$M\gets$\Call{Minimize\_new}{$M'$}}
\State {\Return $M$}
\EndFunction
\end{algorithmic}
\end{algorithm*}

\end{appendix}

\end{document}